\theoremstyle{definition}\newtheorem{theo}{Theorem}[section]
\theoremstyle{definition}\newtheorem{prop}[theo]{Proposition}
\theoremstyle{definition}\newtheorem{lemma}[theo]{Lemma}
\theoremstyle{definition}\newtheorem{pb}{Problem}
\theoremstyle{definition}\newtheorem{coro}[theo]{Corollary}
\theoremstyle{definition}\newtheorem{de}{Definition}[section]
\theoremstyle{definition}\newtheorem{rem}{Remark}[section]
\def\C{\mathbb{C}}
\def\R{\mathbb{R}}
\def\su{\mathfrak{su}}
\def\SU{\textrm{SU}}
\def\O{\mathcal{O}}
\def\H{\mathcal{H}}
\def\ad{\textrm{ad}}
\def\Ad{\textrm{Ad}}
\def\re{\textrm{Re}}
\def\tr{\textrm{tr}}
\def\End{\textrm{End}_{\rho_0}}
\def\cJ{\mathcal{J}}
\def\A{\mathcal{A}}
\def\eps{u}
\date{}
\title{Gradient flow for controlling quantum ensemble}
\author{Ruixing \textsc{Long}
~\and Herschel \textsc{Rabitz}
 \thanks{Ruixing Long and Herschel Rabitz are with the Department of Chemistry, Princeton University, USA, emails: rlong@princeton.edu, hrabitz@princeton.edu}
}
\begin{document}
\maketitle
\begin{abstract}
We propose in this paper a gradient-type dynamical system to solve the problem of maximizing quantum observables for finite dimensional closed quantum ensembles governed by the controlled Liouville-von Neumann equation. The asymptotic behavior is analyzed: we show that under the regularity assumption on the controls the dynamical system almost always converges to a solution of the maximization problem; we also detail the difficulties related to the occurrence of singular controls.
\end{abstract}


\section{Introduction}

Quantum control is concerned with actively manipulating physical and chemical processes on the atomic or molecular scale where quantum mechanics is the rule. The origin of quantum control goes back to the early attempts to use lasers for selectively breaking molecular bonds, and several approaches using quantum interference, adiabatic passage, pump-dump control etc. have been proposed since 1970's. For the historical development of quantum control, the state of the art from both theoretical and experimental points of view, and open research directions, see for instance the recent review paper \cite{Brif:2010ys} . An overview on control techniques applied to manipulating quantum systems is also given in \cite{Dong:2010fk}. More detailed treatment from a control theoretical point of view can be found in \cite{DAlessandro:2008qf}. Among existing methods for controlling quantum systems, optimal control theory plays a major role. The key point is to develop control strategies in a constructive way such that a certain performance index, or cost functional is optimized under the constraints imposed by realistic experimental conditions. Three classes of problems - state transition, observable maximization, and unitary transformation- have been attracting the most attention in the community \cite{Brif:2010ys, More:2011qf}. The performance indices in these problems only depend on the final states of the corresponding quantum systems, although in full generality time or energy consumption could be taken into account as well (see for instance \cite{Khaneja:2001kx, Lapert:2010vn, Boscain:2004uq}). Moreover, these performance indices can also be used as Lyapunov functions in closed-loop feedback designs for stabilization or trajectory tracking, see \cite{Zhu:2003bh, Mirrahimi:2005ly, Mirrahimi:2005zr, Beauchard:2007ve,Wang:2010ys} and references therein. In this paper, we only consider the problem of maximizing quantum observables for closed quantum systems, an open loop strategy will be proposed. The analysis also extends to state-transition and unitary transformation problems.

For a closed $n-$level quantum system, the evolution of its density matrix $\rho(t)$ under the dipole moment approximation is described by the following time-varying Liouville-von Neumann equation: 
\begin{equation}\label{CS}
\left\{\begin{array}{lll}
\dot{\rho}(t)&=&[H_0+\eps(t) H_1, \rho(t)],\qquad t\in [0,T],\\
\rho(0)&=&\rho_0,
\end{array}\right.
\end{equation}where $\rho_0$ is the initial density matrix which is assumed to be Hermitian, the traceless skew-Hermitian matrices $H_0$ and $H_1$ are respectively the free Hamiltonian of the system and the dipole moment. The vector space of traceless skew-Hermitian matrices will be denoted by $\su(n)$. We assume that the admissible controls $\eps$ are elements of $\mathcal{H}:=L^2([0,T],\R)$. This corresponds to the ideal case where the intensity of the external field $\eps(\cdot)$ is not constrained. The vector space $\mathcal{H}$ equipped with the standard inner product $\displaystyle(u,v)_\H=\int_0^Tu(t)v(t)dt$, for $(u,v)\in \H\times\H$, is a Hilbert space. The corresponding norm will be denoted by $\Vert \cdot\Vert_\H$.

It is well-known that the solution of (\ref{CS}) is given by 
\begin{equation}\label{rho-sol}
\rho(t)=U(t)\rho_0U(t)^\dagger,
\end{equation}
where the propagator $U(\cdot)$ satisfies
\begin{equation}\label{CSU}
\left\{\begin{array}{lll}
\dot{U}(t)&=&(H_0+\eps(t) H_1)U(t),\qquad t\in [0,T],\\
U(0)&=&\textrm{Id}.
\end{array}\right.
\end{equation}Since $H_0$ and $H_1$ belong to $\su(n)$, $U(\cdot)$ is a curve in the special unitary group $\SU(n)$. Recall that $\SU(n)$ is a compact Lie group and its Lie algebra is $\su(n)$. Eq. (\ref{rho-sol}) implies that $\rho$ evolves in a subset of the unitary orbit of $\rho_0$ defined by
$$\O(\rho_0):=\{U\rho_0U^\dagger, ~U\in\SU(n)\}.$$ We assume from now that the system (\ref{CSU}) is controllable, then the state space of $\rho$ is equal to $\O(\rho_0)$ and the system \eqref{CS} is controllable in the sense that all points of $\O(\rho_0)$ can be reached from $\rho_0$ by choosing suitable controls. 

\begin{rem}
For $T$ large enough, a necessary and sufficient condition for \eqref{CSU} to be controllable is that the Lie algebra generated by $H_0$ and $H_1$ is equal to $\su(n)$. This is a consequence of the controllability results on general Lie groups obtained by Jurdjevic and Sussmann in \cite{Jurd-Suss}. See \cite{Ramakrishna:1995zr, Fu:2001fk, Schirmer:2001uq}, or  \cite[Ch. 3]{DAlessandro:2008qf} for controllability of quantum systems. We also note that the set of pairs $(H_0, H_1)$ such that $H_0$ and $H_1$ generate $\su(n)$ is open and dense in $\su(n)\times\su(n)$ (cf. \cite[Th. 12, Ch. 6, p 188]{jurdjevic-GCT}).
\end{rem}

Define the end-point map for (\ref{CS}) as
\begin{equation*}
\End(\cdot):\begin{array}{ccc} 
\mathcal{H}&\mapsto& \O(\rho_0)\\
\eps&\rightarrow&\rho(T)
\end{array}.
\end{equation*}

In this paper, we are interested in the following maximization problem:
\begin{pb}\label{main-pb}
Let $\theta$ be a Hermitian matrix. Find $\eps_{\max}\in\H$ maximizing the cost function
\begin{equation}
\mathcal{J}(\eps):=\re~\textrm{tr}(\End(\eps)\theta), \quad \textrm{ for }\eps\in\H.
\end{equation}
\end{pb}

\begin{rem}
$\theta$ represents an observable for the quantum system and $\tr (\rho(T)\theta)$ is the average of different possible results given by the measurement of $\theta$ at time $T$ (cf. \cite[Chap 3-E]{tannoudji}). {\bf Problem 1} consists in finding a control field $u$ maximizing this average.
\end{rem}

This problem is closely related to the two following problems.
\begin{pb}\label{op}
Let $\theta$ be a Hermitian matrix. Find $\rho_{\max}\in\O(\rho_0)$ maximizing the cost function 
\begin{equation}
J(\rho):=\re~\textrm{tr}(\rho\theta),\quad \textrm{ for } \rho\in\O(\rho_0).
\end{equation}
\end{pb}

\begin{pb}\label{mp}
Given an arbitrary target state $\rho^{\textrm{final}}\in\O(\rho_0)$, find $\eps^{\textrm{final}}\in\mathcal{H}$ such that 
\begin{equation}
\End(\eps^{\textrm{final}})=\rho^{\textrm{final}}.
\end{equation}
\end{pb}

\begin{rem}
The compactness of $\O(\rho_0)$ guarantees the existence of solutions for {\bf Problem \ref{op}}, which in turn implies, together with the controllability assumption, the existence of solutions for {\bf Problem \ref{main-pb}}.
\end{rem}

\begin{rem}\label{main-rem}
If we are able to find $\rho_{\max}$ a solution to {\bf Problem \ref{op}}, then {\bf Problem \ref{main-pb}} is equivalent to {\bf Problem \ref{mp}} with target state equal to $\rho_{\max}$. We also note that $\eps$ is a solution to {\bf Problem \ref{main-pb}} if and only if $\End(\eps)$ is a solution to {\bf Problem \ref{op}}.
\end{rem}

We discuss in this paper a gradient-type dynamical system to solve {\bf Problem 1}. The method is well-known in the quantum chemistry and NMR (Nuclear Magnetic Resonance) communities (see for example \cite{Judson:1992vn,Brif:2010ys,khaneja-NMR}). We give here a rigorous mathematical formulation of this method as well as analysis on its asymptotic behavior. We also formulate some open questions related to the presence of singular controls for (\ref{CS}). The paper is organized as follows. We recall in Section \ref{recall} classical results on the geometry of the unitary orbit $\O(\rho_0)$ and derive some computational lemmas related to the end-point map.  The main results of this paper concerning the asymptotic behavior of  the dynamical system are presented in Section \ref{sec:gradient-flow}. 
Finally, concluding remarks are formulated in Section \ref{concl} and Appendix deals with a technical proof. 


\section{Preliminary results}\label{recall}

\subsection{Geometry of the unitary orbit}

We summarize in this paragraph some results on the geometry of the unitary orbit $\O(\rho_0)$. The key point is to define a suitable Riemannian metric on $\O(\rho_0)$. Add references. The presentation here follows \cite[Section 3.4.4]{Schulte-Herbruggen:fk}.

Recall that $\O(\rho_0)$ is a compact connected submanifold of $\C^{n\times n}$ isomorphic to the quotient space $\SU(n)/\bf{H}$, where 
$${\bf H}:=\{U\in\SU(n),~U\rho_0U^\dagger=\rho_0\}$$ denotes the stabilizer group of $\rho_0$. We have
\begin{equation}\label{dim}
\textrm{dim }\O(\rho_0)=n^2-1-\textrm{dim }{\bf H}:=N.
\end{equation}

The tangent space of $\O(\rho_0)$ at $\rho=\Ad_U\rho_0:=U\rho_0U^\dagger$ is given by 
\begin{equation}\label{tangent-space}
T_\rho\O(\rho_0)=\ad_\rho\su(n)=\{\ad_\rho\Omega, ~\Omega\in\su(n)\},
\end{equation}with $\ad_\rho\Omega:=[\rho,\Omega]:=\rho\Omega-\Omega\rho$. 

\begin{rem}
Since the adjoint map $\Ad_U:\Omega\mapsto\Ad_U\Omega$ defines an automorphism on $\su(n)$, the tangent space $T_\rho\O(\rho_0)$ is also equal to $\{\ad_\rho\Ad_U\Omega,~\Omega\in\su(n)\}$.
\end{rem}

In order to define the gradient of the cost function $J$, we first need to equip $T_\rho\O(\rho_0)$ with a scalar product. Note that the kernel of $\ad_\rho:\su(n)\mapsto\C^{n\times n}$ is given by
$$\mathfrak{h}:=\{\Omega\in\su(n),~[\rho_0,\Omega]=0\}$$ and forms the Lie subalgebra to ${\bf H}$.  By the standard Hilbert-Schmidt scalar product $(\Omega_1,\Omega_2)\mapsto \tr(\Omega_1^\dagger\Omega_2)$ on $\su(n)$ one can define the ortho-complement of $\mathfrak{h}$ as 
$$\mathfrak{p}:=\{\Omega_1\in\su(n), ~\tr(\Omega_1^\dagger\Omega_2)=0,\textrm{ for all }\Omega_2\in\mathfrak{h}\}.$$This induces a unique decomposition of any skew-Hermitian matrix $\Omega=\Omega^{\mathfrak{h}}+\Omega^{\mathfrak{p}}$ with $\Omega^{\mathfrak{h}}\in\mathfrak{h}$ and $\Omega^{\mathfrak{p}}\in\mathfrak{p}$. 

\begin{de}\label{Riem-metric}
For $\rho=\Ad_U\rho_0$ with $U\in\SU(n)$, we define a scalar product $\langle\cdot,\cdot\rangle_{\rho}$ on $T_\rho\O(\rho_0)$ by
\begin{equation}\label{metric1}
\langle\ad_\rho(\Ad_U\Omega_1),\ad_\rho(\Ad_U\Omega_2)\rangle_\rho:=\tr(\Omega_1^{\mathfrak{p}\dagger}\Omega_2^{\mathfrak{p}}),
\end{equation} which is equivalent to 
\begin{equation}\label{metric2}
\langle\ad_\rho\Omega_1,\ad_\rho\Omega_2\rangle_\rho:=\tr(\Omega_1^{\mathfrak{p}_\rho\dagger}\Omega_2^{\mathfrak{p}_\rho})
\end{equation}
 with $\mathfrak{p}_\rho:=\Ad_U\mathfrak{p}$.
\end{de}

A fundamental property of the Riemannian metric defined above is it is $\Ad_\SU(n)-$invariant, i.e., $\forall~ \xi,\eta\in T_\rho\O(\rho_0),$ and $\forall~ U\in\SU(n),$
\begin{equation}\label{Ad-invariance}
\langle\xi,\eta\rangle_\rho=\langle\Ad_U\xi,\Ad_U\eta\rangle_{\Ad_U\rho}.
\end{equation}
For later use, we recall the following result.
\begin{prop}[Theorem 3.16 \cite{Schulte-Herbruggen:fk}]\label{grad-formula}
Let $J$ be the cost function considered in {\bf Problem \ref{op}} and $\rho\in\O(\rho_0)$. Then, the gradient of $J$ at $\rho$ with respect to the Riemannian metric defined by Eq. (\ref{metric1}) is given by
$$\nabla J(\rho)=[\rho,[\rho,\theta]].$$

Furthermore, $\rho_c\in\O(\rho_0)$ is a critical point of $J$ if and only if $$[\rho_c,\theta]=0.$$

\end{prop}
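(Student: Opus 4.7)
The plan is to verify the formula $\nabla J(\rho) = [\rho,[\rho,\theta]]$ by checking the defining identity $\langle \nabla J(\rho),\xi\rangle_\rho = dJ_\rho(\xi)$ for every tangent vector $\xi = \ad_\rho\Omega \in T_\rho\O(\rho_0)$, and then to deduce the critical-point characterization from the injectivity of $\ad_\rho$ restricted to $\mathfrak{p}_\rho$.

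First I would compute the differential $dJ_\rho$. Writing $\xi = \ad_\rho\Omega$ and evaluating along the curve $t\mapsto e^{-t\Omega}\rho e^{t\Omega}$, whose derivative at $t=0$ is exactly $\xi$, cyclicity of the trace yields
\begin{equation*}
dJ_\rho(\xi)=\re\tr([\rho,\Omega]\theta)=-\re\tr([\rho,\theta]\Omega).
\end{equation*}
Next I would check that $[\rho,[\rho,\theta]]$ is a legitimate candidate, i.e.\ an element of $T_\rho\O(\rho_0)$: since $\rho$ and $\theta$ are Hermitian, $[\rho,\theta]$ is traceless skew-Hermitian, hence in $\su(n)$, and therefore $[\rho,[\rho,\theta]]=\ad_\rho[\rho,\theta]\in\ad_\rho\su(n)$.

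The key algebraic observation, which is the main obstacle of the proof, is that $[\rho,\theta]\in\mathfrak{p}_\rho$. To see this, pick any $\Omega_2\in\mathfrak{h}_\rho$, so that $[\rho,\Omega_2]=0$; then by cyclicity
\begin{equation*}
\tr\!\bigl([\rho,\theta]^\dagger\Omega_2\bigr)=-\tr\!\bigl([\rho,\theta]\Omega_2\bigr)=-\tr\!\bigl(\theta[\Omega_2,\rho]\bigr)=0,
\end{equation*}
so $[\rho,\theta]$ is orthogonal to $\mathfrak{h}_\rho$ in the Hilbert--Schmidt sense. Using the metric formula \eqref{metric2} and decomposing $\Omega=\Omega^{\mathfrak{h}_\rho}+\Omega^{\mathfrak{p}_\rho}$, this projection identity gives
\begin{equation*}
\langle[\rho,[\rho,\theta]],\ad_\rho\Omega\rangle_\rho
=\tr\!\bigl([\rho,\theta]^{\mathfrak{p}_\rho\dagger}\Omega^{\mathfrak{p}_\rho}\bigr)
=\tr\!\bigl([\rho,\theta]^\dagger\Omega\bigr)
=-\tr\!\bigl([\rho,\theta]\Omega\bigr),
\end{equation*}
which matches $dJ_\rho(\xi)$ computed above (the reality of the trace follows from $[\rho,\Omega]$ being Hermitian, so $\re$ is redundant). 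This proves the gradient formula.

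Finally, for the critical-point statement, $\rho_c$ is critical iff $[\rho_c,[\rho_c,\theta]]=0$. Since $[\rho_c,\theta]\in\mathfrak{p}_{\rho_c}$ by the key observation, and $\mathfrak{p}_{\rho_c}$ is by construction a complement of $\ker\ad_{\rho_c}=\mathfrak{h}_{\rho_c}$, the restriction $\ad_{\rho_c}|_{\mathfrak{p}_{\rho_c}}$ is injective; hence $\ad_{\rho_c}[\rho_c,\theta]=0$ forces $[\rho_c,\theta]=0$. The reverse implication is trivial, giving the claimed equivalence.
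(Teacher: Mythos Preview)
Your proof is correct. The paper itself does not prove this proposition; it quotes it as Theorem~3.16 of \cite{Schulte-Herbruggen:fk}. Your argument is self-contained and uses exactly the ingredients the paper has set up: the metric \eqref{metric2}, the identification $T_\rho\O(\rho_0)=\ad_\rho\su(n)$, and the decomposition $\su(n)=\mathfrak{h}_\rho\oplus\mathfrak{p}_\rho$. The pivotal step --- that $[\rho,\theta]\in\mathfrak{p}_\rho$ because $\tr([\rho,\theta]\Omega_2)=\tr(\theta[\Omega_2,\rho])=0$ for $\Omega_2\in\mathfrak{h}_\rho$ --- is precisely the observation the authors later invoke in their proof of Lemma~\ref{eq:right-hand-grad} (stated there at $\rho_0$ as ``$[\rho_0,\gamma]\in\mathfrak{p}$ for all $\gamma$''), so your approach is fully in line with the paper's methods. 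The critical-point characterization via injectivity of $\ad_{\rho_c}$ on $\mathfrak{p}_{\rho_c}$ is clean and correct.
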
\smallskip

\begin{rem}
Let $dJ(\rho)$ be the differential of $J$ at $\rho$. Then, by definition, we have 
$$\forall~ \eta\in T_\rho\O(\rho_0),~dJ(\rho)\eta=\langle\nabla J(\rho),\eta\rangle_\rho.$$We note that the expression of the gradient depends on the metric chosen for $T_\rho\O(\rho_0)$. One can choose metrics other than the one defined by (\ref{metric1}), for example, the induced Riemannian metric if we consider $\O(\rho_0)$ as a submanifold embedded in $\C^{n\times n}$. However, the invariant metric defined above gives a simple expression of $\nabla J$.
\end{rem}\smallskip

In order to simplify the discussion, we assume that
\begin{itemize}
\item[({\bf H1})] the initial density matrix $\rho_0$ and the observable $O$ both have simple eigenvalues.
\end{itemize}

The following result is a direct consequence of Proposition \ref{grad-formula} and {\bf (H1)}.
\begin{coro}\label{critical-j}
Under ({\bf H1}), $J$ has $M:=n!$ isolated critical points in $\mathcal{O}(\rho_0)$.  
\end{coro}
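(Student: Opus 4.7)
The plan is to combine the characterization of critical points from Proposition \ref{grad-formula} with elementary linear algebra about simultaneously diagonalizable Hermitian matrices, then invoke hypothesis \textbf{(H1)} to count exactly $n!$ candidates and conclude they are isolated by finiteness.

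First, I would apply Proposition \ref{grad-formula}: $\rho_c \in \O(\rho_0)$ is a critical point of $J$ if and only if $[\rho_c,\theta]=0$. Since both $\rho_c$ (as an element of $\O(\rho_0)$, hence Hermitian) and $\theta$ are Hermitian and commute, they are simultaneously unitarily diagonalizable. By \textbf{(H1)}, $\theta$ has simple eigenvalues, so its eigenspaces are one-dimensional; fixing an orthonormal eigenbasis $\{v_1,\dots,v_n\}$ of $\theta$ (unique up to phases and reordering), any matrix commuting with $\theta$ must be diagonal in this basis. Hence every critical point $\rho_c$ has the form $\rho_c=V\,\mathrm{diag}(\mu_1,\dots,\mu_n)\,V^\dagger$, where $V\in\SU(n)$ has columns $v_1,\dots,v_n$.

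Next, I would use the orbit constraint $\rho_c\in\O(\rho_0)$: this forces $\rho_c$ to share the spectrum of $\rho_0$. By \textbf{(H1)} the eigenvalues $\lambda_1,\dots,\lambda_n$ of $\rho_0$ are pairwise distinct, so the diagonal entries $(\mu_1,\dots,\mu_n)$ must be a permutation of $(\lambda_1,\dots,\lambda_n)$. This gives exactly $n!$ candidate critical points
\begin{equation*}
\rho_\sigma := V\,\mathrm{diag}(\lambda_{\sigma(1)},\dots,\lambda_{\sigma(n)})\,V^\dagger, \quad \sigma\in S_n,
\end{equation*}
and because the $\lambda_i$ are distinct, distinct permutations produce distinct diagonal matrices and hence distinct elements of $\O(\rho_0)$. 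Therefore $J$ has exactly $M=n!$ critical points.

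Finally, isolation follows for free: the set of critical points is finite, so in the Hausdorff manifold $\O(\rho_0)$ each critical point admits an open neighborhood disjoint from all the others. No real obstacle arises here; the only subtlety is bookkeeping, namely noting that the phase ambiguity in the eigenvectors $v_i$ of $\theta$ does not produce extra critical points because rescaling a column of $V$ by $e^{i\varphi}$ leaves the conjugation $V\,\mathrm{diag}(\cdots)\,V^\dagger$ unchanged. This completes the argument.
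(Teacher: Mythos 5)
Your argument is correct and is exactly the reasoning the paper leaves implicit when it calls the corollary ``a direct consequence of Proposition \ref{grad-formula} and (\textbf{H1})'': critical points commute with $\theta$, simplicity of $\theta$'s spectrum forces diagonality in its eigenbasis, simplicity of $\rho_0$'s spectrum gives exactly the $n!$ permutation points, and finiteness gives isolation. The only detail worth a half-sentence is that each $\rho_\sigma$ indeed lies in $\O(\rho_0)$ (the $\SU(n)$-orbit coincides with the full unitary orbit since a conjugating unitary can be rescaled by a phase to have unit determinant), which your phase remark essentially already covers.
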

Let $\{\rho_i\}_{i=1,\dots,M}$ be the critical points of $J$ such that $J(\rho_1)\leq\cdots\leq J(\rho_M)$. For $i=1,\dots, M$, let $\nabla^2J(\rho_i)$ be the Hessian of $J$ at $\rho_i$.
\begin{lemma}\label{hessian-j}
Under ({\bf H1}), for $i=1,\dots, M$, $\nabla^2J(\rho_i)$ is non-degenerate. Moreover, $\nabla^2J(\rho_1)$ is positive definite, $\nabla^2J(\rho_M)$ is negative definite, and $\nabla^2J(\rho_i)$ is not definite for $i=2,\dots, M-1$.   
\end{lemma}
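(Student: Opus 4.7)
My plan is to simultaneously diagonalize $\rho_c:=\rho_i$ and $\theta$ and then read off the Hessian from an explicit second-order expansion of $J$. By Proposition~\ref{grad-formula}, every critical point satisfies $[\rho_c,\theta]=0$; under \textbf{(H1)} both matrices have simple spectra, so they share an orthonormal eigenbasis of $\C^n$. Working in this basis I may write $\theta=\textrm{diag}(\lambda_1,\ldots,\lambda_n)$ with $\lambda_1<\cdots<\lambda_n$, and $\rho_c=\textrm{diag}(d_{\sigma(1)},\ldots,d_{\sigma(n)})$ for a unique $\sigma\in S_n$, where $d_1<\cdots<d_n$ are the eigenvalues of $\rho_0$. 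In this basis $\mathfrak{h}$ reduces to the diagonal matrices in $\su(n)$ and $\mathfrak{p}$ to the off-diagonal skew-Hermitian matrices, and $\ad_{\rho_c}:\mathfrak{p}\to T_{\rho_c}\O(\rho_0)$ is a linear isomorphism, so tangent vectors at $\rho_c$ are faithfully parametrized by $\Omega\in\mathfrak{p}$.

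For such an $\Omega$ I would test the Hessian on $\ad_{\rho_c}\Omega$ via the curve $\rho(t):=e^{t\Omega}\rho_c e^{-t\Omega}$, whose initial velocity is $[\Omega,\rho_c]=-\ad_{\rho_c}\Omega$. Since $\rho_c$ is a critical point, the second derivative of $J\circ\rho$ at $0$ depends only on $\dot\rho(0)$ and equals $\nabla^2 J(\rho_c)(\ad_{\rho_c}\Omega,\ad_{\rho_c}\Omega)$; expanding the matrix exponential gives
\[
\left.\frac{d^2}{dt^2}\right|_{t=0}J(\rho(t))=\re\,\tr\bigl([\Omega,[\Omega,\rho_c]]\,\theta\bigr).
\]
Using $\Omega_{jj}=0$, $\Omega_{kj}=-\overline{\Omega_{jk}}$, cyclicity of the trace, and the fact that both $\rho_c$ and $\theta$ are diagonal, a short index manipulation that pairs the $(j,k)$ and $(k,j)$ contributions produces
\[
\re\,\tr\bigl([\Omega,[\Omega,\rho_c]]\,\theta\bigr)=2\sum_{j<k}(d_{\sigma(k)}-d_{\sigma(j)})(\lambda_j-\lambda_k)\,|\Omega_{jk}|^2.
\]
Writing $\Omega_{jk}=a_{jk}+ib_{jk}$ with $a_{jk},b_{jk}\in\R$ for each $j<k$ yields a real coordinate chart on $\mathfrak{p}$ in which the Hessian is diagonal, the coefficient $(d_{\sigma(k)}-d_{\sigma(j)})(\lambda_j-\lambda_k)$ appearing twice.

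The three claims then follow at once. Non-degeneracy is immediate because \textbf{(H1)} forces every difference $d_{\sigma(k)}-d_{\sigma(j)}$ and $\lambda_j-\lambda_k$ to be nonzero for $j<k$, so no Hessian eigenvalue vanishes. Positive-definiteness requires $(d_{\sigma(k)}-d_{\sigma(j)})(\lambda_j-\lambda_k)>0$ for every $j<k$; combined with $\lambda_j<\lambda_k$ this forces $d_{\sigma(k)}<d_{\sigma(j)}$, so $\sigma(i)=n+1-i$, at which point the rearrangement inequality identifies $J(\rho_c)=\sum_i\lambda_i d_{n+1-i}$ as the minimum of $J$ on $\O(\rho_0)$, i.e.\ $\rho_c=\rho_1$. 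Analogously, $\sigma=\textrm{id}$ is the unique permutation giving a negative-definite Hessian, and by the same inequality this is the global maximizer $\rho_M$. For every other $\sigma$ one can exhibit both an inverted pair $j<k$ with $\sigma(j)>\sigma(k)$ and a preserved pair with $\sigma(j)<\sigma(k)$, producing coefficients of both signs, so $\nabla^2 J(\rho_i)$ is indefinite for $2\le i\le M-1$. The only step where care is needed is the algebraic reduction of $\tr([\Omega,[\Omega,\rho_c]]\theta)$ to the displayed sum over $j<k$; once that is in hand, the definiteness trichotomy is essentially a restatement of the rearrangement inequality.
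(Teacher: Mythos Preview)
Your argument is correct and is precisely the route the paper points to: the paper does not give its own proof but cites \cite[Th.~1.3, p.~52]{helmke-moore} and \cite[Cor.~3.8]{Schulte-Herbruggen:fk}, whose method is exactly simultaneous diagonalization of $\rho_c$ and $\theta$, an explicit second-order expansion along $t\mapsto e^{t\Omega}\rho_c e^{-t\Omega}$, and reading off the diagonal Hessian coefficients $(d_{\sigma(k)}-d_{\sigma(j)})(\lambda_j-\lambda_k)$. One cosmetic remark: when you say ``$\mathfrak{h}$ reduces to the diagonal matrices and $\mathfrak{p}$ to the off-diagonal ones'' you are really describing $\mathfrak{h}_{\rho_c}=\Ad_U\mathfrak{h}$ and $\mathfrak{p}_{\rho_c}=\Ad_U\mathfrak{p}$ in the basis adapted to $\rho_c$ (cf.\ Definition~\ref{Riem-metric}, Eq.~\eqref{metric2}); this does not affect the computation but is worth saying explicitly.
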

Lemma \ref{hessian-j} states that $J$ only has one minimum and one maximum, all other critical points are saddles. The proof is a straightforward adaptation of the one for \cite[Th. 1.3, p 52]{helmke-moore}. See also \cite[Cor. 3.8]{Schulte-Herbruggen:fk} and its proof.

\subsection{Differential of the end-point map and its adjoint operator}
The end-point map $\End(\cdot)$ is $C^{\infty}$ (in fact analytical in our case). For $u\in\H$, the first derivative of $\End$ at $u$ is given by
\begin{equation}
d\End(u):\begin{array}{lll}
\H&\mapsto&T_{\End(u)}\O(\rho_0)\smallskip\\
v&\mapsto&d\End(u)v=y_v(T)
\end{array},
\end{equation}where, for every $v\in\H$, $y_v:[0,T]\mapsto T\O(\rho_0)$ is the solution of the variational equation
\begin{equation}\label{var-eq}
\left\{\begin{array}{lll}
\dot{y}(t)&=&[H_0+u(t)H_1,y(t)]+v(t)[H_1,\rho(t)],~Êt\in[0,T],\\
y(0)&=&0,
\end{array}\right.
\end{equation}with $\rho(\cdot)$ denoting the solution of (\ref{CS}) associated with the control $u$. The following computational lemma is obtained by variation of constants.

\begin{lemma}\label{sol:var-eq}
If $U(\cdot):[0,T]\mapsto\SU(n)$ satisfies
\begin{equation*}
\left\{\begin{array}{lll}
\dot{U}(t)&=&(H_0+u(t) H_1)U(t),\qquad t\in [0,T],\\
U(0)&=&\textrm{Id},
\end{array}\right.
\end{equation*}then, $y_v(\cdot):[0,T]\mapsto T\O(\rho_0)$ given by
\begin{equation}\label{sol}
y_v(t)=U(t)\int_0^t[U^\dagger(s) H_1U(s), \rho_0]v(s)ds~U(t)^\dagger
\end{equation}
is the solution of (\ref{var-eq}).
\end{lemma}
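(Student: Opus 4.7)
My plan is to verify the formula by the standard variation-of-constants trick tailored to the Lie bracket structure, because the homogeneous part of the variational equation \eqref{var-eq} has flow given exactly by conjugation with $U(t)$.

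First I would observe that the homogeneous equation $\dot{y}=[H_0+u(t)H_1,y]$ has fundamental solution $y\mapsto U(t)\,y\,U(t)^\dagger$, since differentiating $U(t)CU(t)^\dagger$ for a constant matrix $C$ and using \eqref{CSU} together with $\dot{U}^\dagger(t)=-U(t)^\dagger(H_0+u(t)H_1)$ yields precisely $[H_0+u(t)H_1,U(t)CU(t)^\dagger]$. This motivates the ansatz $y_v(t)=U(t)A(t)U(t)^\dagger$ with $A(0)=0$.

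Next I would substitute this ansatz into \eqref{var-eq}. The two ``conjugation'' terms coming from $\dot{U}A U^\dagger$ and $UA\dot{U}^\dagger$ reconstruct the homogeneous part $[H_0+u(t)H_1,y_v(t)]$ and cancel against the corresponding term on the right-hand side, leaving the identity
\begin{equation*}
U(t)\dot{A}(t)U(t)^\dagger=v(t)[H_1,\rho(t)].
\end{equation*}
Here $\rho(t)=U(t)\rho_0U(t)^\dagger$ by \eqref{rho-sol}. Conjugating both sides by $U(t)^\dagger$ and using that $\Ad_{U(t)^\dagger}$ is a Lie algebra homomorphism, i.e.\ $U(t)^\dagger[X,Y]U(t)=[U(t)^\dagger X U(t),U(t)^\dagger Y U(t)]$, I obtain
\begin{equation*}
\dot{A}(t)=v(t)\,[U(t)^\dagger H_1 U(t),\rho_0].
\end{equation*}
Integrating from $0$ to $t$ with $A(0)=0$ yields the formula \eqref{sol} claimed in the lemma.

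Finally, I would check the initial condition $y_v(0)=U(0)\cdot 0\cdot U(0)^\dagger=0$, which holds trivially, and observe that $y_v(t)$ indeed belongs to $T_{\rho(t)}\mathcal{O}(\rho_0)$: by \eqref{tangent-space} this follows because $y_v(t)=[\rho(t),\Omega(t)]$ with $\Omega(t)=-U(t)\!\int_0^t[U^\dagger(s)H_1U(s),\rho_0]v(s)\,ds\,U(t)^\dagger$ lying in $\su(n)$ (skew-Hermiticity is preserved by the commutator of Hermitian and skew-Hermitian matrices, by adjoint action, and by integration). There is no real obstacle here; the only step that requires a moment of care is the $\Ad$-equivariance of the bracket used to convert $[H_1,\rho(t)]$ into $[U(t)^\dagger H_1U(t),\rho_0]$, but this is a direct algebraic identity from $U(t)U(t)^\dagger=\mathrm{Id}$.
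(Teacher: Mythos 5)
Your argument is the paper's own: the lemma is stated there as ``obtained by variation of constants,'' and your ansatz $y_v(t)=U(t)A(t)U(t)^\dagger$, the cancellation of the homogeneous part, and the reduction to $\dot A(t)=v(t)[U^\dagger(t)H_1U(t),\rho_0]$ with $A(0)=0$ is exactly that computation, carried out correctly. The only slip is in your final tangency check: the $\Omega(t)$ you wrote is just $-y_v(t)$ (you kept the bracket with $\rho_0$ inside the integral), which is Hermitian rather than skew-Hermitian --- the commutator of a skew-Hermitian with a Hermitian matrix is Hermitian --- so it is not in $\su(n)$ and $[\rho(t),-y_v(t)]\neq y_v(t)$ in general. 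The correct choice is $\Omega(t)=-U(t)\int_0^t U^\dagger(s)H_1U(s)\,v(s)\,ds\;U(t)^\dagger\in\su(n)$: writing $A(t)=\bigl[\rho_0,\,-\!\int_0^t U^\dagger(s)H_1U(s)v(s)\,ds\bigr]$ and conjugating by $U(t)$ gives $y_v(t)=[\rho(t),\Omega(t)]\in T_{\rho(t)}\O(\rho_0)$, as required.
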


%

\begin{coro}\label{estimate-var-eq-order1}
There exists a contant $\tilde{C}>0$ depending on $\rho_0$, $H_1$, $T$ such that for all $u\in\H$, we have
\begin{equation}
\Vert d\End(u)v\Vert\leq \tilde{C}\Vert v\Vert_\H,\quad \forall~Êv\in\H.
\end{equation}
\end{coro}
\begin{proof}[Proof of Corollary \ref{estimate-var-eq-order1}] 
It suffices to note that there exists a constant $C_1>0$ such that 
$$\Vert z(t)\Vert\leq C_1\Vert v\Vert_\H,\quad\textrm{ for all }t\in[0,T],\textrm{ and }u\in\H,$$where $\displaystyle z(t)=\int_0^t[U^\dagger(s) H_1U(s), \rho_0]v(s)ds$.
\end{proof}
\medskip

\begin{de}\label{de:regular-control}
A control $u\in\H$ is called {\it regular } if the rank of $d\End(u)$ is equal to the dimension of the state space $\O(\rho_0)$.  The corresponding trajectory is called {\it regular trajectory}. 
\end{de}

\begin{de}\label{de:singular-control}
A control $u\in\H$ is called {\it singular} if the rank of $d\End(u)$ is smaller than the dimension of the state space $\O(\rho_0)$.  The corresponding trajectory is called {\it singular trajectory}. The {\it co-rank} of a singular control $u$ is defined as equal to $$\textrm{dim }\O(\rho_0)-\textrm{ rank }(d\End(u)).$$
\end{de}

\begin{rem}
The notion of regular and singular controls will play a crucial role in the convergence analysis of the gradient flow, see Section \ref{sec:gradient-flow} for more detail.
\end{rem}

\begin{de}
Given $u\in\H$, let $\rho(\cdot)$ be the solution of
\begin{equation}
\left\{\begin{array}{lll}
\dot{\rho}(t)&=&[H_0+u(t) H_1, \rho(t)],\qquad t\in [0,T],\\
\rho(0)&=&\rho_0.
\end{array}\right.
\end{equation}The {\it adjoint equation} along $\rho(\cdot)$ is defined by
\begin{equation}\label{adj}
\left\{\begin{array}{lll}
\dot{q}(t)&=&[H_0+u(t) H_1, q(t)],\qquad t\in [0,T],\\
q(T)&=&q_T,
\end{array}\right.
\end{equation} for some $q_T\in T_{\rho(T)}\O(\rho_0)$. The solution $q(\cdot)$ of Eq. (\ref{adj}) is called {\it adjoint vector}. The corresponding {\it switching function} $\Phi_{\rho_0,q_T}(\cdot)$ is defined by
\begin{equation}
\Phi_{\rho_0,q_T}(t):=\langle q(t), [H_1, \rho(t)]\rangle_{\rho(t)},
\end{equation}where the Riemannian metric $\langle~,~\rangle_{\rho(t)}$ is chosen to be the one given in Definition \ref{Riem-metric}.
\end{de}\smallskip

\begin{lemma}\label{switch}
For $q_T\in  T_{\rho(T)}\O(\rho_0)$ and $v\in\H$, we have
\begin{equation}
\langle q_T,d\End(u)v\rangle_{\rho(T)}=(\Phi_{\rho_0,q_T}, v)_\H.
\end{equation}
\end{lemma}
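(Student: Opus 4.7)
The plan is to evaluate both sides directly using the closed-form expression for $d\End(u)v$ provided by Lemma \ref{sol:var-eq}, and to exploit the $\Ad_{\SU(n)}$-invariance \eqref{Ad-invariance} of the Riemannian metric twice: once globally to shift the pairing at time $T$ back to $t=0$, and once pointwise to re-express the integrand as the switching function at time $s$.

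First I would write $d\End(u)v = y_v(T) = \Ad_{U(T)}\!\int_0^T [U^\dagger(s) H_1 U(s),\rho_0]\,v(s)\,ds$ and apply \eqref{Ad-invariance} with $V = U(T)^\dagger$, which replaces the base point $\rho(T) = \Ad_{U(T)}\rho_0$ by $\rho_0$ and the vector $q_T$ by $U(T)^\dagger q_T U(T)$. Next, I would observe that along the adjoint equation \eqref{adj} the map $t \mapsto U(t)^\dagger q(t) U(t)$ is constant: differentiating gives $-U^\dagger(H_0+uH_1)q U + U^\dagger [H_0+uH_1,q] U + U^\dagger q(H_0+uH_1)U = 0$. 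In particular $U(T)^\dagger q_T U(T) = q(0)$. Since the Riemannian inner product is bilinear, the integral can be pulled out, giving
\begin{equation*}
\langle q_T, d\End(u)v\rangle_{\rho(T)} = \int_0^T v(s)\,\bigl\langle q(0),\,[U^\dagger(s) H_1 U(s),\rho_0]\bigr\rangle_{\rho_0}\,ds.
\end{equation*}

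Finally, I would apply \eqref{Ad-invariance} a second time, pointwise in $s$, with $V = U(s)$. Using $\Ad_{U(s)}\rho_0 = \rho(s)$, $\Ad_{U(s)} q(0) = q(s)$, and the standard identity $\Ad_{U(s)}[U^\dagger(s) H_1 U(s), \rho_0] = [H_1,\rho(s)]$, the integrand becomes $\langle q(s),[H_1,\rho(s)]\rangle_{\rho(s)} = \Phi_{\rho_0,q_T}(s)$, which is exactly $(\Phi_{\rho_0,q_T}, v)_\H$. No step looks like a serious obstacle; the only subtlety worth double-checking is the sign and normalization convention in the evolution of the adjoint vector, which must be such that $q(\cdot)$ is conjugate-transported by the same propagator $U(\cdot)$ as $\rho(\cdot)$, so that both the $\Ad$-invariance reductions land on the same reference point $\rho_0$.
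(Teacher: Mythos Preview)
Your proof is correct and follows essentially the same strategy as the paper: both arguments combine the explicit formula for $y_v(T)$ from Lemma~\ref{sol:var-eq} with the $\Ad_{\SU(n)}$-invariance \eqref{Ad-invariance} of the metric to identify the integrand with the switching function. The only cosmetic difference is that you factor the transport through the base point $\rho_0$ (two applications of \eqref{Ad-invariance}, via $U(T)^\dagger$ and then $U(s)$), whereas the paper transports directly from $\rho(t)$ to $\rho(T)$ in a single step via $U(T)U(t)^\dagger$; your explicit verification that $t\mapsto U(t)^\dagger q(t)U(t)$ is constant is exactly the fact the paper uses implicitly when asserting $\Ad_{U(T)U(t)^\dagger}q(t)=q_T$.
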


\begin{proof}[Proof of Lemma \ref{switch}]
\begin{eqnarray*}
&&\langle q_T,d\End(u)v\rangle_{\rho(T)}\\
&=&\langle q_T,U(T)~\int_0^T[U(s)^\dagger H_1U(s),\rho_0]v(s)ds~U(T)^\dagger\rangle_{\rho(T)}\\
&=&\int_0^T\langle q_T, U(T)[U^\dagger(s)H_1U(s),\rho_0]U^\dagger(T)\rangle_{\rho(T)}v(s)ds.
\end{eqnarray*}

Since the Riemannian metric $\langle~,~\rangle_{\rho(t)}$ is $\Ad_{SU}-$invariant, we have
\begin{eqnarray}
&&\Phi_{\rho_0,q_T}(t)=\langle q(t),[H_1,\rho(t)]\rangle_{\rho(t)}\nonumber\\
&=&\langle~\Ad_{U(T-t)}q(t),~\Ad_{U(T-t)}[H_1,\rho(t)]~\rangle_{\Ad_{U(T-t)}\rho(t)}\nonumber\\
&=&\langle q_T,~U(T-t)[H_1,\rho(t)]U(T-t)^\dagger\rangle_{\rho(T)}\nonumber\\\
&=&\langle q_T,~U(T)[U(t)^\dagger H_1U(t),\rho_0]U(T)^\dagger\rangle_{\rho(T)}.\label{expression-phi}
\end{eqnarray}This implies
$\langle q_T,d\End(u)v\rangle_{\rho(T)}=(\Phi_{\rho_0,q_T}, v)_\H.$
\end{proof}\smallskip

\begin{de}\label{transp}
The adjoint operator $d\End^{\bf *}(u)$ of $d\End(u)$ is defined as the unique operator satisfying
\begin{equation*}
\langle z, d\End(u)v\rangle_{\End(u)}=(d\End^{\bf *}(u)z,v)_\H,
\end{equation*}for all $z\in T_{\End(u)}\O(\rho_0)$ and $v\in\H$.
\end{de}\smallskip


From Lemma \ref{switch} and Definition \ref{transp}, we immediately get the following corollary.

\begin{coro}\label{equal}
For $z\in T_{\End(u)}\O(\rho_0)$, we have 
\begin{equation}
d\End^*(u)z=\Phi_{\rho_0,z}.
\end{equation}
\end{coro}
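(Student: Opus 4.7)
The plan is to simply identify the two expressions for $\langle z, d\End(u)v\rangle_{\rho(T)}$ given by Lemma \ref{switch} on one hand and Definition \ref{transp} on the other.

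First I would apply Lemma \ref{switch} with the choice $q_T = z$, which is legal since $z\in T_{\End(u)}\O(\rho_0) = T_{\rho(T)}\O(\rho_0)$; this yields
\begin{equation*}
\langle z,d\End(u)v\rangle_{\rho(T)}=(\Phi_{\rho_0,z}, v)_\H \quad \text{for all } v\in\H.
\end{equation*}
Next I would invoke Definition \ref{transp}, noting that $\End(u) = \rho(T)$, to write
\begin{equation*}
\langle z, d\End(u)v\rangle_{\rho(T)}=(d\End^{*}(u)z,v)_\H \quad \text{for all } v\in\H.
\end{equation*}

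Subtracting gives $(\Phi_{\rho_0,z} - d\End^{*}(u)z, v)_\H = 0$ for every $v\in\H$. Since $(\cdot,\cdot)_\H$ is an inner product on the Hilbert space $\H$, the element $\Phi_{\rho_0,z} - d\End^{*}(u)z$ must vanish, and thus $d\End^{*}(u)z = \Phi_{\rho_0,z}$, as claimed. There is no genuine obstacle here: the only thing to check is that $\Phi_{\rho_0,z}$ really lives in $\H = L^2([0,T],\R)$, which is clear from the explicit formula \eqref{expression-phi} since $\Phi_{\rho_0,z}$ is continuous in $t$ on $[0,T]$ (hence square-integrable) and real-valued because the Riemannian metric takes real values.
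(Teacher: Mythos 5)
Your argument is correct and is exactly the paper's route: the corollary is stated there as an immediate consequence of Lemma \ref{switch} and Definition \ref{transp}, and you have simply made explicit the identification of the two representations of $\langle z, d\End(u)v\rangle_{\rho(T)}$ together with the non-degeneracy of $(\cdot,\cdot)_\H$. The added check that $\Phi_{\rho_0,z}$ indeed lies in $\H$ is a harmless and reasonable extra detail.
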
\bigskip

\begin{de}\label{gramian}
For $u\in\H$, the non-negative symmetric matrix called {\it controllability Gramian} of (\ref{var-eq}) is defined by 
$$G(u):=d\End(u)d\End^*(u).$$
\end{de}The following fundamental property holds.
\begin{prop}\label{gram}
For all $z\in T_{\End(u)}\O(\rho_0)$, we have
$$\langle z,G(u)z\rangle_{\End(u)}=\Vert d\End^*(u)z\Vert_\H^2=\Vert \Phi_{\rho_0,z}\Vert^2_\H,$$ and $$\textrm{rank }d\End(u)=\textrm{dim }\O(\rho_0)\Longleftrightarrow G(u) \textrm{ is positive definite.}$$
\end{prop}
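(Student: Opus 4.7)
The plan is to separate the statement into two parts: first the chain of equalities $\langle z, G(u)z\rangle_{\End(u)} = \|d\End^*(u)z\|_\H^2 = \|\Phi_{\rho_0,z}\|_\H^2$, and then the rank/positive-definiteness equivalence. Both parts reduce to standard inner-product bookkeeping once the definitions collected in the preceding paragraphs are invoked, so I would not expect any serious analytic difficulty.

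For the first identity, I would simply unfold the definition of $G(u)$: writing $\langle z, G(u)z\rangle_{\End(u)} = \langle z, d\End(u)\,d\End^*(u) z\rangle_{\End(u)}$ and then applying the defining relation of the adjoint from Definition \ref{transp} with $v := d\End^*(u)z \in \H$ yields $(d\End^*(u)z, d\End^*(u)z)_\H = \|d\End^*(u)z\|_\H^2$. The second equality is then immediate from Corollary \ref{equal}, which identifies $d\End^*(u)z$ with the switching function $\Phi_{\rho_0,z}$. Together this gives both equalities.

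For the equivalence, I would exploit the identity just proved to characterize the kernel of $G(u)$. Since $G(u)$ is a symmetric positive semidefinite endomorphism of the finite-dimensional Euclidean space $T_{\End(u)}\O(\rho_0)$, positive-definiteness is equivalent to $\ker G(u) = \{0\}$. From $\langle z,G(u)z\rangle_{\End(u)} = \|d\End^*(u)z\|_\H^2$, one has $G(u)z = 0 \Longleftrightarrow d\End^*(u)z = 0$, so $\ker G(u) = \ker d\End^*(u)$. The standard orthogonality relation $\ker d\End^*(u) = (\mathrm{Im}\,d\End(u))^{\perp}$ (valid because the target of $d\End(u)$ is finite-dimensional, hence $\mathrm{Im}\,d\End(u)$ is closed, with orthogonal complement taken in $T_{\End(u)}\O(\rho_0)$) then gives $\ker G(u) = \{0\} \Longleftrightarrow \mathrm{Im}\, d\End(u) = T_{\End(u)}\O(\rho_0)$, which is exactly $\mathrm{rank}\, d\End(u) = \dim \O(\rho_0)$.

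There is essentially no hard step; the only point requiring a moment of care is the orthogonal-complement identity, which works cleanly here because $T_{\End(u)}\O(\rho_0)$ is finite-dimensional so that $(\mathrm{Im}\,d\End(u))^{\perp\perp} = \mathrm{Im}\,d\End(u)$ holds without any closure argument. Everything else is a direct application of Definition \ref{transp}, Corollary \ref{equal}, and the definition of $G(u)$.
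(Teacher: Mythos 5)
Your argument is correct and is exactly the routine verification this statement calls for: the chain of equalities follows by unfolding Definition \ref{transp} (with $v=d\End^*(u)z$) and Corollary \ref{equal}, and the equivalence follows from $\ker G(u)=\ker d\End^*(u)=(\mathrm{Im}\,d\End(u))^{\perp}$ inside the finite-dimensional space $T_{\End(u)}\O(\rho_0)$. The paper states Proposition \ref{gram} without proof, treating it as a standard fact, and your write-up supplies precisely the omitted argument.
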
\bigskip

For later use, we finish this section by giving the second derivative of $\End(\cdot)$ at $u$ in the direction $v\in\H$.
\begin{equation}
d^2\End(u):\begin{array}{lll}
\H&\mapsto&T_{\End(u)}\O(\rho_0)\smallskip\\
v&\mapsto&d^2\End(u)(v,v)=r_v(T)
\end{array},
\end{equation}where, for every $v\in\H$, $r_v:[0,T]\mapsto T\O(\rho_0)$ is the solution of the second variational equation
\begin{equation}\label{var-eq2}
\left\{\begin{array}{lll}
\dot{r}(t)&=&[H_0+u(t)H_1,r(t)]+v(t)[H_1,y_v(t)],~t\in[0,T],\\
r(0)&=&0,
\end{array}\right.
\end{equation}with $y_v(\cdot)$ denoting the solution of the first variational equation (\ref{var-eq}).\medskip

The following lemma is straightforward.
\begin{lemma}\label{sol:var-eq2}
If $U(\cdot):[0,T]\mapsto\SU(n)$ satisfies
\begin{equation*}
\left\{\begin{array}{lll}
\dot{U}(t)&=&(H_0+u(t) H_1)U(t),\qquad t\in [0,T],\\
U(0)&=&\textrm{Id},
\end{array}\right.
\end{equation*}then, $r_v(\cdot):[0,T]\mapsto T\O(\rho_0)$ given by
\begin{equation}\label{sol}
r_v(t)=U(t)\int_0^t[U^\dagger(s) H_1U(s), z_v(t)]v(s)ds~U(t)^\dagger,
\end{equation}with $\displaystyle z_v(\cdot):=\int_0^t[U^\dagger(s)H_1U(s),\rho_0])v(s)ds$
is the solution of (\ref{var-eq2}).
\end{lemma}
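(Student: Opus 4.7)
The plan is to apply variation of constants to the linear inhomogeneous ODE \eqref{var-eq2}, in exact parallel with the proof of Lemma \ref{sol:var-eq}. The homogeneous part $\dot{X}(t) = [H_0 + u(t)H_1, X(t)]$ has propagator $X(t) = U(t) X(0) U(t)^\dagger$, so I would seek $r_v$ in the factored form $r_v(t) = U(t) R(t) U(t)^\dagger$ with $R(0) = 0$.

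Differentiating this ansatz and using \eqref{CSU} twice, the two bracket contributions from $\dot U U^\dagger$ and $U \dot U^\dagger$ reproduce $[H_0 + u(t)H_1, r_v(t)]$, so matching with \eqref{var-eq2} gives
$$U(t)\,\dot{R}(t)\,U(t)^\dagger = v(t)\,[H_1, y_v(t)].$$
Conjugating by $U(t)^\dagger$ on the left and $U(t)$ on the right and using that $\Ad_{U(t)^\dagger}$ is a Lie algebra automorphism yields
$$\dot{R}(t) = v(t)\,\bigl[\,U^\dagger(t) H_1 U(t),\; U^\dagger(t) y_v(t) U(t)\,\bigr].$$

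The next step is to identify the second argument of the bracket: by Lemma \ref{sol:var-eq}, $y_v(t) = U(t) z_v(t) U(t)^\dagger$, so $U^\dagger(t) y_v(t) U(t) = z_v(t)$ exactly as defined in the statement. Integrating $\dot R$ from $0$ to $t$ and conjugating back by $U(t)$ then gives the claimed formula.

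There is no real obstacle here; the argument is mechanical once the ansatz $r_v = U R U^\dagger$ is chosen, which is the content of the parenthetical phrase "by variation of constants" in Lemma \ref{sol:var-eq}. The only bookkeeping point worth double-checking is the dummy variable inside the bracket: the variation-of-constants integral produces $[U^\dagger(s) H_1 U(s),\, z_v(s)]$ with $z_v$ evaluated at the integration variable $s$, so the occurrence of $z_v(t)$ in the displayed formula should be read as $z_v(s)$.
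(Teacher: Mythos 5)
Your argument is correct and is exactly the variation-of-constants computation the paper has in mind: it omits the proof as ``straightforward,'' and the intended derivation mirrors Lemma \ref{sol:var-eq} via the ansatz $r_v(t)=U(t)R(t)U(t)^\dagger$, just as you do. Your bookkeeping remark is also right: with $z_v(t)$ held at the upper limit inside the integral, differentiation would produce the spurious extra term $U(t)\int_0^t\bigl[U^\dagger(s)H_1U(s),\dot z_v(t)\bigr]v(s)\,ds\,U(t)^\dagger$, so the displayed $z_v(t)$ must indeed be read as $z_v(s)$, consistent with $U^\dagger(s)y_v(s)U(s)=z_v(s)$ from Lemma \ref{sol:var-eq}.
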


\begin{coro}\label{estimate-var-eq2}
There exists a contant $\tilde{C}>0$ depending on $\rho_0$, $H_1$, $T$ such that for all $u\in\H$, we have
\begin{equation}
\Vert d^2\End(u)(v,v)\Vert\leq \tilde{C}\Vert v\Vert_\H^2,\quad \forall~Êv\in\H.
\end{equation}
\end{coro}

%

\section{Gradient flow in $\H$}\label{sec:gradient-flow}

A natural idea to tackle {\bf Problem 1}, which is an optimization problem in the infinite dimensional control space $\H$, is to follow the gradient of $\mathcal{J}$ as an ascent direction in order to increase $\mathcal{J}$. 
The purpose of this section is to present in a rigorous way a gradient-type algorithm widely used in the quantum chemistry and NMR communities, see for example \cite{Judson:1992vn,Brif:2010ys,khaneja-NMR}.

\subsection{Description of the method and some general properties}
We first compute the gradient of $\mathcal{J}$. Note that $\mathcal{J}(u)=J(\End(u))$. 
\begin{lemma}\label{gradJ}
For $u\in\H$, we have
\begin{equation}\label{eq:gradJ}
\nabla\mathcal{J}(u)=d\End^*(u)\nabla J(\End(u)).
\end{equation}
\end{lemma}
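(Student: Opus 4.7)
The plan is to derive the formula by a straightforward chain-rule argument, carefully tracking the two distinct inner products involved: the Hilbert space inner product $(\cdot,\cdot)_\H$ on $\H$ (used to define $\nabla\mathcal{J}$) and the Riemannian metric $\langle\cdot,\cdot\rangle_{\End(u)}$ on $T_{\End(u)}\O(\rho_0)$ (used to define $\nabla J$ via Proposition \ref{grad-formula}).

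First I would note that $\mathcal{J} = J \circ \End$, so for any $v\in\H$ the chain rule gives
$$d\mathcal{J}(u)v = dJ(\End(u))\bigl(d\End(u)v\bigr).$$
Next I would invoke the defining property of $\nabla J$ with respect to the Riemannian metric, namely $dJ(\rho)\eta = \langle\nabla J(\rho),\eta\rangle_\rho$ for all $\eta\in T_\rho\O(\rho_0)$, applied at $\rho = \End(u)$ and $\eta = d\End(u)v$:
$$d\mathcal{J}(u)v = \langle \nabla J(\End(u)),\, d\End(u)v\rangle_{\End(u)}.$$

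Then I would apply Definition \ref{transp} of the adjoint operator $d\End^*(u)$, with $z = \nabla J(\End(u)) \in T_{\End(u)}\O(\rho_0)$, to rewrite the right-hand side as
$$d\mathcal{J}(u)v = \bigl(d\End^*(u)\nabla J(\End(u)),\, v\bigr)_\H.$$
Finally, comparing with the defining relation $d\mathcal{J}(u)v = (\nabla\mathcal{J}(u),v)_\H$ for the Hilbert space gradient of $\mathcal{J}$, and using the fact that this identity must hold for every $v\in\H$, uniqueness of the Riesz representative yields $\nabla\mathcal{J}(u) = d\End^*(u)\nabla J(\End(u))$.

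No real obstacle arises here; the only point requiring care is to keep the two inner products straight so that the switch between them is mediated precisely by the adjoint $d\End^*(u)$, whose explicit form in terms of the switching function is already available through Corollary \ref{equal} if one wishes to write $\nabla\mathcal{J}(u) = \Phi_{\rho_0,\nabla J(\End(u))}$ as a concrete $L^2$ function on $[0,T]$.
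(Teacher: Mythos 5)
Your argument is correct and is essentially identical to the paper's own proof: chain rule for $\mathcal{J}=J\circ\End$, the defining property of $\nabla J$ with respect to the Riemannian metric, Definition \ref{transp} of the adjoint, and identification of the $L^2$ gradient from $(\nabla\mathcal{J}(u),v)_\H=d\mathcal{J}(u)v$ for all $v$. No issues to flag.
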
\smallskip

\begin{proof}[Proof of Lemma \ref{gradJ}]
Given $u\in\H$, for any $v\in\H$, we have
\begin{eqnarray*}
d\mathcal{J}(u)v&=&dJ(\End(u))d\End(u)v\\
&=&\langle \nabla J(\End(u)), d\End(u)v\rangle_{\End(u)}\\
&=&(d\End^*(u)\nabla J(\End(u)), v)_\H.
\end{eqnarray*}By defintion, we have
$$\nabla\mathcal{J}(u)=d\End^*(u)\nabla J(\End(u)).
$$
\end{proof}\smallskip

\begin{algorithm}[H]\label{algo:grad-flow}
\caption{Gradient Flow}
\begin{itemize}
\item[(i)] Choose an arbitrary control $u_0\in \H$.
\item[(ii)] Solve the following initial value problem 
\begin{equation}\label{IVP-grad-flow1}
\left\{\begin{array}{lll}
\displaystyle \frac{d\Pi}{ds}(s)&=&\nabla\mathcal{J}(\Pi(s))\smallskip\\
 \Pi(0)&=&u_0
\end{array}\right.,
\end{equation}or more precisely,
\begin{equation}\label{IVP-grad-flow}
\left\{\begin{array}{lll}
\displaystyle \frac{d\Pi}{ds}(s)&=&d\End^*(\Pi(s))\displaystyle\nabla J(\End(\Pi(s)))\smallskip\\
 \Pi(0)&=&u_0
\end{array}\right..
\end{equation}
\end{itemize}
\end{algorithm}

Before giving some preliminary analysis on the algorithm, we first explain how to compute the right-hand side of Eq. (\ref{IVP-grad-flow}).
\begin{lemma}\label{eq:right-hand-grad}
For $u\in\H$, we have
\begin{eqnarray*}
&&d\End^*(u)\displaystyle\nabla J(\End(u))\\
&=&-\tr~([\rho_0,U(T)^\dagger\theta U(T)]U^\dagger(t)H_1U(t)),
\end{eqnarray*}where $U(\cdot)$ satisfies
\begin{equation*}
\left\{\begin{array}{lll}
\dot{U}(t)&=&(H_0+u(t) H_1)U(t),\qquad t\in [0,1],\\
U(0)&=&\textrm{Id}.
\end{array}\right.
\end{equation*}

\end{lemma}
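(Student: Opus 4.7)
The strategy is to reduce the claim to a direct evaluation of the switching function at the particular terminal vector given by $\nabla J(\End(u))$. By Corollary \ref{equal}, for any $z \in T_{\End(u)}\O(\rho_0)$ one has $d\End^*(u)z = \Phi_{\rho_0,z}$; and by Proposition \ref{grad-formula}, the relevant $z$ here is $\nabla J(\End(u)) = [\rho(T),[\rho(T),\theta]]$, where $\rho(T) = \End(u)$. The task therefore reduces to computing $\Phi_{\rho_0,[\rho(T),[\rho(T),\theta]]}(t)$ explicitly in terms of $U$, $\rho_0$, $\theta$ and $H_1$.

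The plan is to start from the $\Ad_{\SU(n)}$-invariant form of the switching function recorded in (\ref{expression-phi}),
\begin{equation*}
\Phi_{\rho_0,q_T}(t) = \langle q_T,\, U(T)[U(t)^\dagger H_1 U(t), \rho_0] U(T)^\dagger \rangle_{\rho(T)},
\end{equation*}
and to apply one more $\Ad_{U(T)^\dagger}$-conjugation in order to transport the whole pairing from $T_{\rho(T)}\O(\rho_0)$ to $T_{\rho_0}\O(\rho_0)$. Since conjugation is a Lie-algebra homomorphism and $\Ad_{U(T)^\dagger}\rho(T) = \rho_0$, the terminal gradient becomes $\Ad_{U(T)^\dagger}[\rho(T),[\rho(T),\theta]] = [\rho_0,[\rho_0, U(T)^\dagger\theta U(T)]]$, and I arrive at
\begin{equation*}
d\End^*(u)\nabla J(\End(u))(t) = \langle [\rho_0,[\rho_0, U(T)^\dagger\theta U(T)]],\, [U(t)^\dagger H_1 U(t), \rho_0] \rangle_{\rho_0}.
\end{equation*}

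It then remains to unwind Definition \ref{Riem-metric} at the base point $\rho_0$. Writing the two tangent vectors as $\ad_{\rho_0}\Omega_1$ and $\ad_{\rho_0}\Omega_2$ with $\Omega_1 = [\rho_0, U(T)^\dagger\theta U(T)]$ and $\Omega_2 = -U(t)^\dagger H_1 U(t)$ in $\su(n)$, the metric evaluates to $\tr(\Omega_1^{\mathfrak{p}\dagger}\Omega_2^\mathfrak{p})$. The essential observation, and the one genuinely non-trivial step of the proof, is that $\Omega_1$ already lies in $\mathfrak{p}$: for any $X\in\mathfrak{h}$, cyclicity of the trace combined with $[\rho_0,X]=0$ gives $\tr(\Omega_1^\dagger X) = \tr(U(T)^\dagger\theta U(T)\,[\rho_0,X]) = 0$. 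Consequently $\Omega_1^\mathfrak{p} = \Omega_1$, the projection on $\Omega_2$ drops out by orthogonality of $\mathfrak{p}$ and $\mathfrak{h}$, and the pairing collapses to the bare Hilbert-Schmidt form $\tr(\Omega_1^\dagger \Omega_2)$.

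The remaining step is cosmetic: using skew-Hermiticity of $\Omega_1$ to replace $\Omega_1^\dagger$ by $-\Omega_1$, followed by one cyclic rearrangement, produces the expression $\tr([\rho_0, U(T)^\dagger\theta U(T)]\,U^\dagger(t)H_1 U(t))$ stated in the lemma. The obstacle to anticipate is really only the handling of the abstract $\mathfrak{p}$-projection in the definition of the Riemannian metric; once the inclusion $\Omega_1\in\mathfrak{p}$ is established, everything else is trace gymnastics of the same flavor as in the derivation of (\ref{expression-phi}).
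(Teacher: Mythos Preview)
Your argument follows essentially the same route as the paper's: invoke Corollary~\ref{equal} to reduce to the switching function, start from~(\ref{expression-phi}), use $\Ad$-invariance to pull the pairing back to $T_{\rho_0}\O(\rho_0)$, establish the key inclusion $[\rho_0,U(T)^\dagger\theta U(T)]\in\mathfrak{p}$ via the trace-cyclicity trick, and then read off the result from Definition~\ref{Riem-metric}. The only discrepancy is the overall sign of your final expression versus the one displayed in the lemma; your arithmetic is in fact correct (the paper's own last line silently identifies $\Omega_1^\dagger$ with $\Omega_1$, missing the skew-Hermiticity $\Omega_1^\dagger=-\Omega_1$), so this is a typo in the statement rather than a gap in your proof.
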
\smallskip

\begin{proof}[Proof of Lemma \ref{eq:right-hand-grad}]
By Corollary \ref{equal}, it is equivalent to compute $\Phi_{\rho_0,\nabla J(\End(u))}$. Eq. (\ref{expression-phi}) implies that
\begin{eqnarray*}
&&\Phi_{\rho_0,\nabla J(\End(u))}\\
&=&\langle \nabla J(\rho(T)),~U(T)[U(t)^\dagger H_1U(t),\rho_0]U(T)^\dagger\rangle_{\rho(T)}\\
&=&\langle [\rho_0,U^\dagger(T)[\rho(T),\theta]U(T)],~Ê[U(t)^\dagger H_1U(t),\rho_0]\rangle_{\rho_0}\\
&=&-\langle\ad_{\rho_0}[\rho_0,U^\dagger(T)\theta U(T)],~Ê\ad_{\rho_0}U(t)^\dagger H_1U(t)\rangle_{\rho_0}.
\end{eqnarray*}

We note that $[\rho_0, U^\dagger (T)\theta U(T)]\in\mathfrak{p}$. In fact, we have $[\rho_0,\gamma]\in\mathfrak{p}$ for all $\gamma\in\C^{n\times n}$. Indeed, let $\omega\in\mathfrak{h}$. By definition of $\mathfrak{h}$, we have $[\rho_0,\omega]=0$. Since $\tr([\rho_0,\gamma]\omega)=-\tr(\gamma[\rho_0,\omega])$, we get $[\rho_0,\gamma]\in\mathfrak{p}$. Therefore, by the definition of $\langle\cdot,\cdot\rangle_{\rho_0}$, we have $\Phi_{\rho_0,\nabla J(\End(u))}=-\tr~([\rho_0,U(T)^\dagger\theta U(T)]U^\dagger(t)H_1U(t)).$
\end{proof}\smallskip

\begin{prop}\label{prop:grad-flow}
The initial value problem defined by Eq. (\ref{IVP-grad-flow}) has a unique solution which is globally defined for all $s\geq 0$.
\end{prop}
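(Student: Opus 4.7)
The plan is to invoke the Cauchy--Lipschitz theorem in the Hilbert space $\mathcal{H}$ to get local existence and uniqueness, and then use a uniform bound on the vector field $F(u):=d\End^*(u)\nabla J(\End(u))$ to preclude finite-time blow-up. Writing the ODE in the form $\dot\Pi=F(\Pi)$, the two ingredients needed are (i) local Lipschitz continuity of $F:\mathcal{H}\to\mathcal{H}$, and (ii) a uniform estimate of the form $\|F(u)\|_\mathcal{H}\le M$ for some constant $M$ independent of $u$.

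For step (i), I would exploit the explicit formula from Lemma \ref{eq:right-hand-grad},
$$F(u)(t)=-\tr\!\bigl([\rho_0,U(T)^\dagger \theta U(T)]\,U(t)^\dagger H_1 U(t)\bigr),$$
where $U$ solves \eqref{CSU} with control $u$. A standard Gronwall estimate on \eqref{CSU} shows that $u\mapsto U(\cdot)$ is locally Lipschitz (in fact $C^\infty$, even real-analytic) as a map from $\mathcal{H}$ into $C^0([0,T],\SU(n))$: given $u_1,u_2\in\mathcal{H}$ bounded in $\mathcal{H}$-norm by some $R$, one obtains $\sup_t\|U_{u_1}(t)-U_{u_2}(t)\|\le C(R)\|u_1-u_2\|_\mathcal{H}$. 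Composing and multiplying smooth finite-dimensional operations then yields local Lipschitz continuity of $F$ from $\mathcal{H}$ to $\mathcal{H}$, which is enough to apply the Cauchy--Lipschitz theorem and obtain a unique maximal solution $\Pi:[0,s^*)\to\mathcal{H}$.

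For step (ii), two facts combine nicely. By Corollary \ref{estimate-var-eq-order1}, $\|d\End(u)\|_{\mathrm{op}}\le\tilde C$ uniformly in $u$, and since the adjoint has the same operator norm, $\|d\End^*(u)\|_{\mathrm{op}}\le\tilde C$. On the other hand, $\nabla J(\rho)=[\rho,[\rho,\theta]]$ is a continuous function on the compact manifold $\mathcal{O}(\rho_0)$, so $K:=\sup_{\rho\in\mathcal{O}(\rho_0)}\|\nabla J(\rho)\|_\rho<\infty$. Since $\End(u)\in\mathcal{O}(\rho_0)$ always, one gets
$$\|F(u)\|_\mathcal{H}=\|d\End^*(u)\nabla J(\End(u))\|_\mathcal{H}\le \tilde C\,K=:M,$$
uniformly in $u\in\mathcal{H}$.

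To conclude, I would apply the blow-up alternative: on the maximal interval $[0,s^*)$, the uniform bound yields $\|\Pi(s_2)-\Pi(s_1)\|_\mathcal{H}\le M|s_2-s_1|$, so $\Pi(s)$ stays in a bounded subset of $\mathcal{H}$, hence does not blow up in finite time, forcing $s^*=+\infty$. The only mildly delicate step is step (i), the verification that the dependence $u\mapsto U$ is Lipschitz from $\mathcal{H}$ (i.e.\ with respect to the $L^2$ norm of the control, not the $L^\infty$ norm) into $C^0([0,T],\SU(n))$; this is standard but relies on Cauchy--Schwarz to control $\int_0^T|u_1(s)-u_2(s)|\,ds$ by $\sqrt{T}\,\|u_1-u_2\|_\mathcal{H}$ before invoking Gronwall. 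All remaining ingredients are either finite-dimensional or already recorded earlier in the paper.
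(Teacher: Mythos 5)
Your proposal is correct and follows essentially the same route as the paper: a uniform bound on the vector field (the paper gets $\Vert\nabla\mathcal{J}(u)\Vert_\H\leq C$ directly from the explicit formula of Lemma \ref{eq:right-hand-grad} and $U(\cdot)\in\SU(n)$, you get it from Corollary \ref{estimate-var-eq-order1} plus compactness of $\O(\rho_0)$, which is the same estimate), followed by a linear-growth/no-blow-up argument for global existence. The only difference is that you spell out the local Lipschitz dependence $u\mapsto U_u$ needed for Cauchy--Lipschitz, a step the paper simply declares straightforward.
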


\begin{proof}[Proof of Proposition \ref{prop:grad-flow}]
The uniqueness and local existence of solution for Eq. (\ref{IVP-grad-flow}) is straightforward. If $u$ is not a critical point of $\mathcal{J}$, since $U(\cdot)\in\SU(n)$, Lemma \ref{eq:right-hand-grad} implies that there exists a constant $C>0$ depending on $\rho_0$, $\theta$, $H_1$, and the final time $T$ such that
\begin{equation}\label{eq:estimate}
\Vert \nabla\mathcal{J}(u)\Vert_\H\leq C,\quad\forall~u\in\H.
\end{equation}Then, by Cauchy-Schwartz inequality, we have
\begin{eqnarray*}
\frac{d\Vert \Pi(s)\Vert_\H}{ds}&=&(\frac{\Pi(s)}{\Vert \Pi(s)\Vert_\H},\frac{d\Pi(s)}{ds})_\H\leq C.
\end{eqnarray*}Finally, Growall inequality implies that
\begin{equation}\label{eq:estimate-sol}
\Vert \Pi(s)\Vert_\H\leq Cs.
\end{equation}
Therefore, the solution of Eq. (\ref{IVP-grad-flow}) is globally defined on $[0,\infty[$. 
\end{proof}

%

\begin{prop}\label{converg-set2}
Given $u_0\in\H$ which is not a critical point of $\mathcal{J}$, the solution of Eq. (\ref{IVP-grad-flow}) starting from $u_0$ converges to a connected component of the set of critical points of $\mathcal{J}$ as $s\rightarrow +\infty$.
\end{prop}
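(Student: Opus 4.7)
The plan is to combine standard gradient-ascent estimates with Barbalat's lemma and a LaSalle-type invariance argument on the $\omega$-limit set of the trajectory. The two inputs I would lean on are the uniform bounds on the first and second derivatives of $\End(\cdot)$ (Corollaries \ref{estimate-var-eq-order1} and \ref{estimate-var-eq2}) and the boundedness of $\mathcal{J}$, since its image lies in the compact orbit $\O(\rho_0)$. First, along the flow $\frac{d}{ds}\mathcal{J}(\Pi(s)) = \Vert \nabla\mathcal{J}(\Pi(s))\Vert_\H^2 \geq 0$, so $\mathcal{J}(\Pi(s))$ is nondecreasing and bounded above by $\sup_{\rho \in \O(\rho_0)}\re\,\tr(\rho\theta)$; it therefore converges to some $\mathcal{J}_\infty$, and integration yields
$$\int_0^{+\infty}\Vert \nabla\mathcal{J}(\Pi(s))\Vert_\H^2 \, ds = \mathcal{J}_\infty - \mathcal{J}(u_0) < +\infty.$$

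Next I would show that $s \mapsto \Vert \nabla\mathcal{J}(\Pi(s))\Vert_\H^2$ is uniformly continuous on $[0,+\infty)$. Its time derivative equals $2\,(\nabla\mathcal{J}(\Pi(s)),\, \nabla^2\mathcal{J}(\Pi(s))\nabla\mathcal{J}(\Pi(s)))_\H$, which is bounded uniformly in $s$ thanks to the pointwise estimate (\ref{eq:estimate}) and the Hessian bound one reads off from Corollary \ref{estimate-var-eq2} together with Lemma \ref{eq:right-hand-grad}. Barbalat's lemma then forces $\Vert \nabla\mathcal{J}(\Pi(s))\Vert_\H \to 0$ as $s \to +\infty$.

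I would then upgrade this to convergence to a connected component of the critical set of $\mathcal{J}$ by studying the $\omega$-limit set $\Omega$ of the trajectory in a compact ambient space. Lemma \ref{eq:right-hand-grad} shows that $\nabla\mathcal{J}(u)$ depends on $u$ only through the associated propagator $U_u(\cdot)\in C([0,T],\SU(n))$, and $\SU(n)$ is compact; this is what allows extracting convergent subsequences. Every accumulation point is then a critical point of $\mathcal{J}$ by continuity of $\nabla\mathcal{J}$ together with the previous step, and $\Omega$ is nonempty and connected because $\Pi(\cdot)$ is a continuous curve on which $\mathcal{J}$ tends to the constant $\mathcal{J}_\infty$. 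A connected subset of the critical set must then lie in a single connected component, giving the claim.

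The main obstacle is the lack of a priori precompactness of $\{\Pi(s)\}_{s\geq 0}$ in the infinite-dimensional space $\H$: the growth estimate $\Vert \Pi(s)\Vert_\H \leq C s$ from Proposition \ref{prop:grad-flow} is too weak to conclude directly. The workaround is to carry out the $\omega$-limit analysis in the compact orbit $\O(\rho_0)$ (via $\End$) or in $C([0,T],\SU(n))$ (via the propagator), exploit compactness there, and use the factorization of $\nabla\mathcal{J}$ through the propagator to transfer the conclusion back into a statement about $\Pi(\cdot)$ in $\H$.
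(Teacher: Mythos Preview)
Your argument follows the same skeleton as the paper's: monotonicity of $\mathcal{J}\circ\Pi$, boundedness of $\mathcal{J}$ via compactness of $\O(\rho_0)$, uniform continuity of $s\mapsto\Vert\nabla\mathcal{J}(\Pi(s))\Vert_\H^2$, and Barbalat's lemma to force $\Vert\nabla\mathcal{J}(\Pi(s))\Vert_\H\to 0$. The only technical difference is in how uniform continuity is obtained: the paper writes $\frac{d}{ds}\mathcal{J}(\Pi(s))=\langle\nabla J(\End(\Pi(s))),\,G(\Pi(s))\,\nabla J(\End(\Pi(s)))\rangle$ and argues that each factor ($\Pi$, $\End$, $G$, and $\nabla J$ on the compact orbit) is Lipschitz, using Corollaries~\ref{estimate-var-eq-order1} and~\ref{estimate-var-eq2} and the estimate~(\ref{eq:estimate}); you instead bound the $s$-derivative directly through the Hessian of $\mathcal{J}$. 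Both routes work, and they use the same ingredients.

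Your final two paragraphs, on the $\omega$-limit set and the lack of precompactness of $\{\Pi(s)\}$ in $\H$, actually go \emph{beyond} what the paper does. The paper's proof stops at Barbalat's lemma and simply restates the conclusion $\Vert\nabla\mathcal{J}(\Pi(s))\Vert_\H\to 0$ as ``$\Pi(\cdot)$ converges to a connected component of the critical set'' (the remark immediately following the proposition confirms this is meant in a weak sense, not as pointwise convergence in $\H$). So your compactness workaround via the propagator in $C([0,T],\SU(n))$ is not needed to match the paper's level of rigor, though it does address a genuine informality that the paper leaves unresolved.
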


\begin{proof}[Proof of Proposition \ref{converg-set2}]
If $u_0$ is not a critical point of $\mathcal{J}$, then
\begin{eqnarray}
\frac{d\cJ(\Pi(s))}{ds}&=&(\nabla\cJ(\Pi(s)),\frac{d\Pi(s)}{ds})_\H\nonumber\\
&=&\Vert\nabla\cJ(\Pi(s))\Vert_\H^2>0.\label{eq:increase}
\end{eqnarray}
Since $\O(\rho_0)$ is compact, the real-valued function $J$ defined on $\mathcal{O}(\rho_0)$ is bounded. Therefore, $\cJ=J\circ\End$ is also bounded. (\ref{eq:increase}) implies that $\displaystyle\lim_{s\rightarrow+\infty}\cJ(\Pi(s))$ exists. 

We now show that ${\displaystyle\frac{d\cJ(\Pi(s))}{ds}}$ is uniformly continuous. By Corollary \ref{estimate-var-eq-order1}, Eq. (\ref{eq:estimate}), and Corollary \ref{estimate-var-eq2} respectively, $\End(\cdot)$, $\Pi(\cdot)$, and $G(\cdot)$ are all Lipschitz functions, they are therefore uniformly continuous. As $\nabla J(\cdot)$ is a continuous function defined on the compact set $\O(\rho_0)$, it is also uniformly continuous.  Therefore, ${\displaystyle\frac{dJ(\pi(s))}{ds}}$ is uniformly continuous as composition of uniformly continuous functions. 

Since $\displaystyle\lim_{s\rightarrow+\infty}J(\pi(s))$ exists and ${\displaystyle\frac{dJ(\pi(s))}{ds}}$ is uniformly continuous, Barbalat's Lemma implies that 
$$\lim_{s\rightarrow+\infty}\frac{d\cJ(\Pi(s))}{ds}=0.$$In other words, $\Pi(\cdot)$ converges to a connected component of the set of critical point of $\mathcal{J}$.

\end{proof}

\begin{rem}
The above result only guarantees the convergence of $\Pi(\cdot)$ to a {\it set} of critical points but does not directly imply the existence of $\displaystyle\lim_{s\rightarrow+\infty}\Pi(s)$. We need further information about the set of critical points of $\mathcal{J}$.
\end{rem}

%
%
%
%
\subsection{Characterization of critical points}

\begin{prop}\label{critical-J}
A control $u\in\H$ is a critical point of $\mathcal{J}$ if and only if
\begin{equation}
\nabla J(\End(u))\in\textrm{ Kernel } (d\End^*(u)),
\end{equation}which is equivalent to
\begin{equation}
\nabla J(\End(u))\perp \textrm{ Image } (d\End(u)),
\end{equation}where the orthogonality symbol $\perp$ is taken with respect to the inner product $\langle\cdot,\cdot\rangle_{\End(u)}$. An equivalent condition is that
the switching function $\Phi_{\rho_0,\nabla J(\End(u))}(\cdot)$ is equal to zero almost everywhere on $[0,T]$. 
\end{prop}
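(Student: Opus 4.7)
The proof should be essentially a direct chain of equivalences, since all the relevant machinery has already been assembled in the preceding lemmas. The plan is to use Lemma \ref{gradJ} to rewrite the vanishing of $\nabla\mathcal{J}(u)$, then translate this through the definition of the adjoint and through Corollary \ref{equal}.

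First I would start from the defining equation $\nabla\mathcal{J}(u)=d\End^*(u)\,\nabla J(\End(u))$ provided by Lemma \ref{gradJ}. By definition, $u$ is a critical point of $\mathcal{J}$ if and only if $\nabla\mathcal{J}(u)=0$, which is equivalent to
\[
\nabla J(\End(u))\in\ker\bigl(d\End^*(u)\bigr).
\]
This gives the first characterization immediately.

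Next I would obtain the equivalence with orthogonality to the image. Since $d\End(u):\H\to T_{\End(u)}\O(\rho_0)$ is a bounded linear operator between Hilbert spaces (with the inner products $(\cdot,\cdot)_\H$ and $\langle\cdot,\cdot\rangle_{\End(u)}$ respectively), the standard orthogonal decomposition gives
\[
\ker\bigl(d\End^*(u)\bigr)=\bigl(\textrm{Image}\,d\End(u)\bigr)^{\perp},
\]
with the orthogonality taken with respect to $\langle\cdot,\cdot\rangle_{\End(u)}$. Here the image is finite-dimensional, so there is no closure issue to worry about. This yields the second characterization.

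Finally, for the switching-function statement, I would invoke Corollary \ref{equal}, which identifies $d\End^*(u)z=\Phi_{\rho_0,z}$ as an element of $\H=L^2([0,T],\R)$. Specialising to $z=\nabla J(\End(u))$ gives $\nabla\mathcal{J}(u)=\Phi_{\rho_0,\nabla J(\End(u))}$, and vanishing in $L^2$ is exactly vanishing almost everywhere on $[0,T]$. Chaining these equivalences completes the proof.

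There is no real obstacle here: every nontrivial ingredient (the gradient formula, the adjoint, and the expression of $d\End^*$ as a switching function) has already been established. The only minor point to be careful about is that the orthogonality and the zero condition for $\Phi_{\rho_0,\nabla J(\End(u))}$ are taken with respect to the correct inner products, namely $\langle\cdot,\cdot\rangle_{\End(u)}$ on $T_{\End(u)}\O(\rho_0)$ and $(\cdot,\cdot)_\H$ on $\H$, but this is already built into the definition of $d\End^*(u)$ in Definition \ref{transp}.
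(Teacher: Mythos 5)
Your proof is correct and follows essentially the same route as the paper: it combines Lemma \ref{gradJ} with the identity $\ker(d\End^*(u))=(\textrm{Image}\,d\End(u))^{\perp}$ and then invokes Corollary \ref{equal} for the switching-function formulation, which is exactly the paper's argument (only written out in more detail).
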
\smallskip
 
\begin{proof}[Proof of Corollary \ref{critical-J}]

It suffices to note that the kernel of $d\End^*(u)$ is equal to the orthogonal complement of of the image of $d\End(u)$ with respect to the inner product $\langle\cdot,\cdot\rangle_{\End(u)}$. The last condition comes from Corollary \ref{equal}.

\end{proof}\smallskip

Proposition \ref{critical-J} together with Lemma \ref{eq:right-hand-grad} implies the following more explicit characterization.
\begin{coro}\label{coro:critical-J2}
A control $u\in\H$ is a critical point of $\mathcal{J}$ if and only if 
\begin{equation}\label{eq:critical-J3}
\tr~([\rho_0,U(T)^\dagger\theta U(T)]U^\dagger(t)H_1U(t))=0,~\textrm{ for }t\in[0,T],
\end{equation}which is equivalent to
\begin{equation}\label{eq:critical-J4}
\tr~([\rho(T),\theta]U(t-T)^\dagger H_1U(t-T))=0,~\textrm{ for }t\in[0,T].
\end{equation}
\end{coro}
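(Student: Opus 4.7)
The plan is to chain together Proposition \ref{critical-J} and Lemma \ref{eq:right-hand-grad}, then do a short algebraic rearrangement to get the second form.

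First I would invoke Proposition \ref{critical-J}: $u\in\H$ is a critical point of $\mathcal{J}$ if and only if the switching function $\Phi_{\rho_0,\nabla J(\End(u))}(t)$ vanishes almost everywhere on $[0,T]$. Because $u\in L^2$, the propagator $U(\cdot)$ is absolutely continuous, and hence the map $t\mapsto \Phi_{\rho_0,\nabla J(\End(u))}(t)$ is continuous on $[0,T]$; any continuous function vanishing a.e.\ vanishes everywhere, which lets me state the condition pointwise for all $t\in[0,T]$.

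Second, I would apply Corollary \ref{equal} and Lemma \ref{eq:right-hand-grad} to get the explicit expression
\begin{equation*}
\Phi_{\rho_0,\nabla J(\End(u))}(t)=-\tr\bigl([\rho_0,U(T)^\dagger\theta U(T)]\,U^\dagger(t)H_1U(t)\bigr).
\end{equation*}
Setting this equal to zero is exactly (\ref{eq:critical-J3}).

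Third, for the equivalence with (\ref{eq:critical-J4}), I would use $\rho(T)=U(T)\rho_0U(T)^\dagger$ together with the bilinearity of the commutator to get
\begin{equation*}
[\rho(T),\theta]=U(T)\,[\rho_0,U(T)^\dagger\theta U(T)]\,U(T)^\dagger.
\end{equation*}
Substituting this identity into (\ref{eq:critical-J3}) and using cyclicity of the trace would turn the left-hand side into $\tr\bigl([\rho(T),\theta]\,U(T)U^\dagger(t)H_1U(t)U(T)^\dagger\bigr)$. Interpreting the notation $U(t-T)$ as the two-time propagator of (\ref{CSU}) from $T$ to $t$, i.e.\ $U(t-T):=U(t)U(T)^\dagger$, so that $U(t-T)^\dagger H_1U(t-T)=U(T)U^\dagger(t)H_1U(t)U(T)^\dagger$, this rewrites precisely as (\ref{eq:critical-J4}).

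There is no real obstacle here: the corollary is essentially a repackaging of Proposition \ref{critical-J} and Lemma \ref{eq:right-hand-grad}, and the only point requiring a moment of care is fixing the convention for $U(t-T)$ so that the equivalence between the two trace conditions is literal rather than up to a unitary conjugation.
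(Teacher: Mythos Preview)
Your proposal is correct and follows exactly the route the paper indicates: the paper states only that the corollary is implied by Proposition~\ref{critical-J} together with Lemma~\ref{eq:right-hand-grad} and gives no further details, so your argument is a faithful (and more complete) expansion of that one-line justification. Your extra care in upgrading ``almost everywhere'' to ``everywhere'' via continuity and in fixing the meaning of $U(t-T)$ as the two-time propagator $U(t)U(T)^\dagger$ is appropriate and not spelled out in the paper.
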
\medskip

The following properties are straightforward.
\begin{coro}\label{inclus}
Consider $u\in\H$. If $\End(u)$ is a critical point of $J$, then $u$ is a critical point of $\mathcal{J}$.
\end{coro}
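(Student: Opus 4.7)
The plan is almost immediate from the chain rule computation already established in Lemma \ref{gradJ}.

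First, I would recall the identity $\mathcal{J}(u) = J(\End(u))$, which via Lemma \ref{gradJ} yields
\begin{equation*}
\nabla\mathcal{J}(u) = d\End^{*}(u)\,\nabla J(\End(u)).
\end{equation*}
Next, I would use the hypothesis that $\End(u)$ is a critical point of $J$, which by definition means $\nabla J(\End(u)) = 0$ in $T_{\End(u)}\O(\rho_0)$. Substituting into the displayed formula gives $\nabla \mathcal{J}(u) = d\End^{*}(u)(0) = 0$, since $d\End^{*}(u)$ is a linear operator. Hence $u$ is a critical point of $\mathcal{J}$.

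Alternatively, one can invoke Proposition \ref{critical-J} directly: the assumption $\nabla J(\End(u)) = 0$ trivially places $\nabla J(\End(u))$ in $\ker(d\End^{*}(u))$, which is the criterion for $u$ to be critical for $\mathcal{J}$. There is no genuine obstacle here; the statement is a one-line consequence of the chain rule together with Lemma \ref{gradJ}. The only thing worth emphasizing in the write-up is that the converse is false in general, since a control $u$ can be critical for $\mathcal{J}$ without $\End(u)$ being critical for $J$ whenever $\nabla J(\End(u))$ is a nonzero element lying in $\ker(d\End^{*}(u))$, i.e., when $u$ is a singular control with $\nabla J(\End(u))$ orthogonal to the image of $d\End(u)$. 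This highlights precisely why singular controls play the central role announced in the remark following Definition \ref{de:singular-control}.
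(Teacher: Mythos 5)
Your argument is correct and is exactly the route the paper has in mind: the paper states this corollary as an immediate consequence of Lemma \ref{gradJ} (equivalently, of the criterion in Proposition \ref{critical-J}), namely that $\nabla J(\End(u))=0$ forces $\nabla\mathcal{J}(u)=d\End^{*}(u)\,\nabla J(\End(u))=0$. Your closing remark about the converse failing only through singular controls matches Corollary \ref{regular-critical} and Remark \ref{rem:sing-reg}, so nothing is missing.
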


\begin{coro}\label{regular-critical}
If $u\in\H$ is a regular control, then $u$ is a critical point of $\mathcal{J}$ if and only if $\End(u)$ is a critical point of $J$.
\end{coro}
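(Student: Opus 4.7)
The forward direction ($\End(u)$ critical for $J$ implies $u$ critical for $\mathcal{J}$) is already supplied by Corollary \ref{inclus}, so the plan is to establish the converse under the regularity hypothesis.

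Assume $u$ is regular and a critical point of $\mathcal{J}$. By Proposition \ref{critical-J}, the critical point condition for $\mathcal{J}$ translates to
\[
\nabla J(\End(u)) \,\perp\, \textrm{Image}(d\End(u))
\]
with respect to the inner product $\langle\cdot,\cdot\rangle_{\End(u)}$ on $T_{\End(u)}\O(\rho_0)$. The key observation is that regularity of $u$, as stated in Definition \ref{de:regular-control}, means $\textrm{rank}\,d\End(u) = \dim \O(\rho_0)$, so $\textrm{Image}(d\End(u))$ fills the whole tangent space $T_{\End(u)}\O(\rho_0)$. Since $\nabla J(\End(u))$ lies in that same tangent space and is orthogonal to all of it, it must vanish. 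By Proposition \ref{grad-formula}, $\nabla J(\End(u))=0$ is exactly the criticality of $\End(u)$ for $J$, finishing the proof.

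There is essentially no obstacle: the statement is a direct consequence of combining the orthogonality characterization of critical points of $\mathcal{J}$ (Proposition \ref{critical-J}) with the surjectivity of $d\End(u)$ onto $T_{\End(u)}\O(\rho_0)$ implied by regularity. The content of the corollary is really that regularity rules out the only way $u$ could be critical for $\mathcal{J}$ without $\End(u)$ being critical for $J$, namely, a non-zero $\nabla J(\End(u))$ lying in a proper orthogonal complement of $\textrm{Image}(d\End(u))$. Such configurations can only arise when $u$ is singular, which is precisely the phenomenon flagged in Remark after Definition \ref{de:singular-control}.
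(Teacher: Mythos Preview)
Your argument is correct and is precisely the ``straightforward'' reasoning the paper has in mind: the paper does not spell out a proof but simply declares Corollaries \ref{inclus} and \ref{regular-critical} to be immediate consequences of Proposition \ref{critical-J}. Your use of the orthogonality characterization together with surjectivity of $d\End(u)$ under regularity is exactly the intended one-line deduction.
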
\smallskip

For later discussion, we distinguish two types of critical points.

\begin{de}\label{kinematic-dynamic}
A control $u\in\H$ is a {\it kinematic critical point} of $\mathcal{J}$ if $\End(u)$ is a critical point of $J$. All other critical points of $\mathcal{J}$ are called {\it dynamic or non-kinematic critical point}.
\end{de}

\begin{rem}\label{rem:sing-reg}
We note that dynamic critical points are necessarily singular in the sense of Definition \ref{de:singular-control} while kinematic critical points can be either regular or singular. In the absence of singular controls in $\H$, all the critical points of $\mathcal{J}$ are kinematic and regular. We also note that the dynamic critical points are necessarily not solutions for {\bf Problem 1}, see Remark \ref{main-rem}.
\end{rem}

\subsection{Analysis in the absence of singular controls}

The standing assumption of this section is the following:
\begin{itemize}
\item[({\bf H2})] {\it all} the controls in $\H$ are regular (Definition \ref{de:regular-control}).
\end{itemize}

Although ({\bf H2}) seems restrictive, it allows us to give a complete analysis of the asymptotic behavior of (\ref{IVP-grad-flow1}) in accordance with existing numerical simulation results. The goal of this section is to prove the following result.
\begin{theo}\label{pt-convergence}
Under ({\bf H1}) and ({\bf H2}), every solution of the gradient flow (\ref{IVP-grad-flow1}) converges to a critical point of $\cJ$ as $s\rightarrow \infty$. Moreover, for almost all initial conditions, the solution of (\ref{IVP-grad-flow1}) converges to a solution of {\bf Problem 1}.
\end{theo}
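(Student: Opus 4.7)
The plan is, first, to pin down the critical set of $\mathcal{J}$. Under (H2), Corollary~\ref{regular-critical} says $u$ is critical for $\mathcal{J}$ iff $\End(u)$ is critical for $J$; together with (H1) and Corollary~\ref{critical-j}, this gives
$$
\textrm{Crit}(\mathcal{J}) \;=\; \bigsqcup_{i=1}^{M} \mathcal{M}_i, \qquad \mathcal{M}_i := \End^{-1}(\rho_i),
$$
where each $\mathcal{M}_i$ is a closed real-analytic submanifold of $\mathcal{H}$ of codimension $N=\dim\mathcal{O}(\rho_0)$, because $\End$ is analytic with everywhere-surjective differential under (H2). Proposition~\ref{converg-set2} already confines the $\omega$-limit of any non-stationary trajectory $\Pi(\cdot)$ to a single connected component of $\textrm{Crit}(\mathcal{J})$, and in particular to exactly one $\mathcal{M}_i$.

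Second, I would upgrade this set-convergence to point-convergence. The natural tool is the \L ojasiewicz--Simon gradient inequality: since $\mathcal{J}=J\circ\End$ is real-analytic on $\mathcal{H}$, at any critical $u^{*}$ there exist $\theta\in(0,\tfrac{1}{2}]$ and $C,\sigma>0$ with
$$
\| \nabla \mathcal{J}(u) \|_{\mathcal{H}} \;\geq\; C \, | \mathcal{J}(u) - \mathcal{J}(u^{*}) |^{1-\theta} \quad \text{for } \|u-u^{*}\|_{\mathcal{H}}<\sigma.
$$
Combined with $\tfrac{d}{ds}\mathcal{J}(\Pi)=\|\nabla\mathcal{J}(\Pi)\|_{\mathcal{H}}^{2}$ from (\ref{eq:increase}), this is known to force $\int_{0}^{\infty}\|\dot\Pi(s)\|_{\mathcal{H}}\,ds<\infty$, so that $\Pi(s)\to u^{\infty}$ for some $u^{\infty}\in\mathcal{M}_i$. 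A more explicit alternative exploits Morse--Bott structure: at $u^{*}\in\mathcal{M}_i$ the chain rule collapses (because $dJ(\rho_i)=0$) to
$$
\nabla^{2}\mathcal{J}(u^{*}) \;=\; d\End(u^{*})^{*}\, \nabla^{2}J(\rho_i)\, d\End(u^{*}),
$$
which, by Lemma~\ref{hessian-j} and the surjectivity of $d\End(u^{*})$, is non-degenerate on the $N$-dimensional normal bundle of $\mathcal{M}_i$; a Morse--Bott normal form then yields finite length along the flow near each point of $\mathcal{M}_i$.

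Third, for the genericity statement, note that the solutions of Problem~\ref{main-pb} are exactly the elements of $\mathcal{M}_M$, since $J(\rho_M)=\max J$. For $i<M$, Lemma~\ref{hessian-j} furnishes at least one strictly positive eigenvalue of $\nabla^{2}J(\rho_i)$, hence of $\nabla^{2}\mathcal{J}(u^{*})$ transverse to $\mathcal{M}_i$, so the ascent flow has at least one expanding direction at every point of $\mathcal{M}_i$. The infinite-dimensional stable-manifold theorem then produces, around each $u^{*}\in\mathcal{M}_i$, a local $C^{1}$ stable submanifold of codimension $\geq 1$; transporting these slices backward along the flow and taking the union gives a global stable set $W^{s}(\mathcal{M}_i)$ which is a countable union of positive-codimension analytic submanifolds of $\mathcal{H}$. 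The exceptional set $\bigcup_{i=1}^{M-1}W^{s}(\mathcal{M}_i)$ is then meager in $\mathcal{H}$, and for any non-critical $u_{0}$ outside this set, Step~2 forces $\Pi(s)\to u^{\infty}\in\mathcal{M}_M$, i.e.\ $u^{\infty}$ is a solution of Problem~\ref{main-pb}.

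I expect the principal obstacle to be two-fold. First, justifying the \L ojasiewicz--Simon hypothesis (or equivalently, setting up the Morse--Bott normal form) in the present $L^{2}$ setting, where $\nabla^{2}\mathcal{J}$ has an infinite-dimensional kernel $\ker d\End(u^{*})$ along each $\mathcal{M}_i$, so that the classical finite-dimensional Morse lemma does not apply directly; and second, giving a precise meaning to ``almost all'' initial conditions in the infinite-dimensional space $\mathcal{H}$, which has no canonical Lebesgue measure — the cleanest interpretation is Baire-genericity, possibly sharpened to prevalence in the Hunt--Sauer--Yorke sense, exploiting the finite codimension of the stable manifolds built in Step~3.
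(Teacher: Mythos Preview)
Your Morse--Bott route is precisely the paper's: it proves a Morse--Bott normal form (Proposition~\ref{morse-bott}) in a neighbourhood of each $u_c\in\mathcal{M}_i$, reduces the flow there to the linear system $\dot v_1=-v_1$, $\dot v_2=v_2$, $\dot w=0$, and reads off both point-convergence (convergence to $\mathcal{C}$ forces $v_{2,0}=0$, whence the limit is a single point) and genericity (for $i<M$ the local stable set $\{v_{2,0}=0\}$ has positive codimension). Your stable-manifold language in Step~3 is the abstract form of exactly this computation.

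Two remarks on what you add. First, your \L ojasiewicz--Simon alternative works, but not via the standard hypotheses: $\nabla^{2}\mathcal{J}(u^{*})$ is finite-rank, hence \emph{not} Fredholm on $\mathcal{H}$. What saves you is the factorisation $\mathcal{J}=J\circ\End$ through the finite-dimensional $\mathcal{O}(\rho_0)$: the ordinary \L ojasiewicz inequality for the Morse function $J$ near $\rho_i$ pulls back via $\|\nabla\mathcal{J}(u)\|_{\mathcal{H}}\geq c\,\|\nabla J(\End(u))\|$, where $c>0$ is locally uniform under (H2) because the smallest eigenvalue of $G(u)$ is continuous and strictly positive. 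This gives the inequality with exponent $\theta=\tfrac12$, hence exponential convergence, which the paper's normal-form argument also yields but less directly. Second, the paper never defines ``almost all'' in $\mathcal{H}$; your observation that Baire genericity (or prevalence, exploiting the finite codimension of the stable manifolds) is the correct reading is a genuine clarification of a point the paper leaves implicit.
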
\medskip

We start by giving a more precise characterization of the set of critical points of $\cJ$ under ({\bf H2}). For ease of notation, the kernel of $d\End(u)$ and the image of $d\End^*(u)$ will respectively be denoted by $K_u$ and $\mathcal{I}_u$.
\begin{prop}\label{critical-manifold}
For $i=1,\dots, M$, let $\displaystyle\H_i:=\{u\in\H, ~\End(u)=\rho_i\}$. Under ({\bf H2}), we have
\begin{itemize}
\item[(i)] the set of critical points of $\mathcal{J}$ is the disjoint union of $\H_i$ with $i=1,\dots, M$;
\item[(ii)] $\H_i$'s are submanifolds in $\H$ of co-dimension $N$;
\item[(iii)] The tangent space to $\H_i$ at $u\in\H_i$ denoted by $T_{u}\H_i$ is equal to $K_u$.
\end{itemize}
\end{prop}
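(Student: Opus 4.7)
The plan is to reduce this to the preimage (submersion) theorem applied to the analytic map $\End:\H\to\O(\rho_0)$, since (H2) precisely says that every point of $\H$ is a regular point of $\End$.

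For part (i), the identification of the critical set of $\mathcal{J}$ is immediate from the machinery already in place. Under (H2), every $u\in\H$ is regular, so Corollary \ref{regular-critical} applies at every point: $u$ is critical for $\mathcal{J}$ if and only if $\End(u)$ is critical for $J$. By Corollary \ref{critical-j}, $J$ has exactly $M=n!$ critical points $\rho_1,\dots,\rho_M$, hence the critical set of $\mathcal{J}$ is $\bigcup_{i=1}^M \End^{-1}(\rho_i)=\bigcup_{i=1}^M\H_i$. The union is disjoint because $\End$ is single-valued, so two distinct $\H_i$'s cannot share a control.

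For parts (ii) and (iii), I would invoke the submersion theorem in Hilbert spaces. The map $\End$ is $C^\infty$ (in fact analytic), and by (H2) the differential $d\End(u):\H\to T_{\End(u)}\O(\rho_0)$ is surjective at every $u$. To pass from surjectivity to a submanifold structure one needs the kernel $K_u$ to split as a closed complemented subspace, but this is automatic in our Hilbert setting: $K_u$ is closed (as the kernel of a bounded linear operator), and its orthogonal complement $K_u^\perp\subset\H$ maps isomorphically onto the $N$-dimensional tangent space $T_{\End(u)}\O(\rho_0)$, hence $\dim K_u^\perp=N$ and $\H=K_u\oplus K_u^\perp$. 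The implicit function theorem then produces, at each $u\in\H_i$, local coordinates in which $\End$ is the projection onto the $N$-dimensional factor, so $\H_i=\End^{-1}(\rho_i)$ is a smooth submanifold of codimension $N$ and its tangent space is the kernel of the linearization, $T_u\H_i=K_u$.

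The main technical content is thus the verification that the abstract submersion theorem applies, which in our Hilbert/finite-dimensional-target setting amounts only to (a) the uniform surjectivity granted by (H2) and (b) the existence of a closed complement to $K_u$, granted for free by orthogonal projection. No genuine obstacle arises: neither do we need to know that $K_u$ varies continuously in $u$ (the theorem is pointwise), nor do we need any global analysis, since the submanifold structure and tangent space description are local statements around each $u\in\H_i$.
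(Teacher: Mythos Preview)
Your proposal is correct and follows essentially the same route as the paper: part (i) via Corollary \ref{regular-critical} together with the fact that $J$ has only isolated critical points, and parts (ii)--(iii) via the Submersion Theorem, using that $d\End(u)$ is surjective under ({\bf H2}) and that its kernel splits (the paper phrases this as ``$d\End(u)$ has finite rank, thus its kernel splits,'' which is the same observation you make via orthogonal complements in $\H$).
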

\begin{proof}[Proof of Proposition \ref{critical-manifold}]
(i) is a consequence of \ref{regular-critical} and the fact that $J$ has only isolated critical points. For all $u\in\H$, $d\End(u)$ has finite rank, thus its kernel splits. By ({\bf H2}), $\End(\cdot)$ is a submersion from $\H$ to $\mathcal{O}(\rho_0)$ (cf. \cite[Prop. 2.3, p 29]{lang}). Therefore, by the Submersion Theorem, the set $\End^{-1}(\rho_i)$ is a submanifold  in $\H$ of co-dimension $N$, and $T_u\H_i=K_u$ (cf. \cite[Th. 3.5.4, p 175]{Marsden}).  
\end{proof}\smallskip

By computing the second order Taylor expansion of $\mathcal{J}$, the following result holds true.
\begin{lemma}\label{expression-hessian}
For $u\in\H_i$, the Hessian of $\mathcal{J}$ at $u$ is given by 
\begin{equation}\label{eq:hessian}
\A(u):=d\End^*(u)\nabla^2J(\rho_i)d\End(u).
\end{equation}
\end{lemma}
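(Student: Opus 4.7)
The plan is to leverage the chain rule $\mathcal{J}=J\circ\End$ together with a second-order Taylor expansion, exploiting the fact that $\End(u)=\rho_i$ is a critical point of $J$ to discard nuisance terms. Since $u\in\H_i$, Proposition \ref{grad-formula} gives $\nabla J(\rho_i)=0$, so by Lemma \ref{gradJ} the point $u$ is a critical point of $\mathcal{J}$; its Hessian $\mathcal{A}(u)$ is therefore an intrinsically defined self-adjoint operator on the Hilbert space $\H$.

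First I would Taylor-expand the end-point map about $u$ using Lemmas \ref{sol:var-eq} and \ref{sol:var-eq2}: for $v\in\H$ and small $\epsilon\in\R$,
$$\End(u+\epsilon v)=\rho_i+\epsilon\, d\End(u)v+\tfrac{\epsilon^{2}}{2}\,d^{2}\End(u)(v,v)+O(\epsilon^{3}).$$
Substituting this curve into the Taylor expansion of $J$ around $\rho_i$, every term that carries a factor of $\nabla J(\rho_i)$ vanishes by the critical point condition. In particular, the contribution of $d^{2}\End(u)(v,v)$, which a priori enters linearly paired against $\nabla J(\rho_i)$, drops out. What survives is the intrinsic Hessian of $J$ evaluated on $d\End(u)v$:
$$\mathcal{J}(u+\epsilon v)=J(\rho_i)+\tfrac{\epsilon^{2}}{2}\,\nabla^{2}J(\rho_i)\bigl(d\End(u)v,\,d\End(u)v\bigr)+O(\epsilon^{3}).$$

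Next I would match this with the defining expansion $\mathcal{J}(u+\epsilon v)=\mathcal{J}(u)+\tfrac{\epsilon^{2}}{2}(\mathcal{A}(u)v,v)_\H+O(\epsilon^{3})$. Viewing $\nabla^{2}J(\rho_i)$ as the self-adjoint endomorphism of $T_{\rho_i}\O(\rho_0)$ associated to the bilinear form via the Riemannian metric of Definition \ref{Riem-metric}, the quadratic coefficient reads $\langle\nabla^{2}J(\rho_i)\,d\End(u)v,\,d\End(u)v\rangle_{\rho_i}$. A single application of Definition \ref{transp} rewrites this as $(d\End^{*}(u)\,\nabla^{2}J(\rho_i)\,d\End(u)v,\,v)_\H$, and polarization (or uniqueness of the self-adjoint operator representing a quadratic form on $\H$) yields the claimed identity $\mathcal{A}(u)=d\End^{*}(u)\,\nabla^{2}J(\rho_i)\,d\End(u)$.

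The only step that requires care is the handling of the second-order Taylor expansion of $J$ on the manifold $\O(\rho_0)$. Along a general curve $\gamma$ with $\gamma(0)=\rho_i$, the second derivative $\frac{d^{2}}{d\epsilon^{2}}J(\gamma(\epsilon))\bigr|_{\epsilon=0}$ equals $\nabla^{2}J(\rho_i)(\gamma'(0),\gamma'(0))+dJ(\rho_i)\bigl(\tfrac{D\gamma'}{d\epsilon}(0)\bigr)$ for any connection; at the critical point $\rho_i$ the second term vanishes because $dJ(\rho_i)=0$. This connection-independence is precisely what makes the formula clean, and it is also the reason one cannot expect such a simple expression away from $\H_i$, where a genuine $d^{2}\End$ contribution against $\nabla J(\End(u))\neq 0$ would have to be retained.
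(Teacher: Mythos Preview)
Your proposal is correct and follows precisely the approach the paper indicates: the paper merely asserts that the lemma holds ``by computing the second order Taylor expansion of $\mathcal{J}$,'' and you have filled in exactly those details, including the key observation that the $d^{2}\End$ contribution drops out because $\nabla J(\rho_i)=0$ (a point the paper itself later makes explicit in Eq.~\eqref{hessian-J2} for the non-kinematic case).
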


\begin{prop}\label{hessian-kernel}
For $u\in\H_i$, we have
\begin{itemize}
\item[(i)] the kernel of $\A(u)$ is equal to $K_u$; 
\item[(ii)] the number of positive (resp. negative) eigenvalues of $\A(u)$ is equal to the number of positive (resp. negative) eigenvalues of $\nabla^2J(\rho_i)$.
\end{itemize}
\end{prop}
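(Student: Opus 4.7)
The plan is to exploit the factorization $\A(u)=P^{\ast}HP$, where $P:=d\End(u):\H\to T_{\rho_i}\O(\rho_0)$ and $H:=\nabla^{2}J(\rho_i)$. Assumption ({\bf H2}) makes $P$ surjective, and Lemma \ref{hessian-j} tells us that $H$ is non-degenerate on the finite-dimensional space $T_{\rho_i}\O(\rho_0)$. Since the image of $P$ is finite-dimensional and hence closed, standard Hilbert-space duality gives $\mathcal{I}_u=\mathrm{Image}(P^{\ast})=(\ker P)^{\perp}=K_u^{\perp}$. Consequently $P^{\ast}$ is injective and the restriction $P|_{\mathcal{I}_u}:\mathcal{I}_u\to T_{\rho_i}\O(\rho_0)$ is a linear isomorphism. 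From here, both (i) and (ii) reduce to elementary linear algebra.

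For (i), the inclusion $K_u\subseteq\ker\A(u)$ is immediate from the definition. For the converse, I would observe that if $\A(u)v=P^{\ast}(HPv)=0$, then injectivity of $P^{\ast}$ forces $HPv=0$, and non-degeneracy of $H$ then forces $Pv=0$, so $v\in K_u$.

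For (ii), I decompose $\H=K_u\oplus\mathcal{I}_u$ orthogonally with respect to $(\cdot,\cdot)_{\H}$. Since $\A(u)$ vanishes on $K_u$ and is self-adjoint, it maps $\mathcal{I}_u$ into itself, and its nonzero spectrum coincides with that of $\A(u)|_{\mathcal{I}_u}$. For any $v,w\in\mathcal{I}_u$, the adjoint relation yields $(\A(u)v,w)_{\H}=\langle HPv,Pw\rangle_{\rho_i}$, so $P|_{\mathcal{I}_u}$ realizes a linear congruence between the quadratic form of $\A(u)|_{\mathcal{I}_u}$ on $\mathcal{I}_u$ and the quadratic form of $H$ on $T_{\rho_i}\O(\rho_0)$. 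Sylvester's law of inertia then yields the equality of positive and negative eigenvalue counts claimed in (ii).

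I do not foresee a genuine obstacle here: the only mild subtlety is confirming that the count of positive/negative eigenvalues on the infinite-dimensional space $\H$ is faithfully captured by the restriction to the finite-dimensional subspace $\mathcal{I}_u$. This is automatic because $\A(u)$ has finite rank at most $N$, so its nonzero spectrum lies entirely in $\A(u)|_{\mathcal{I}_u}$, while the kernel $K_u$ contributes only zero eigenvalues, which are irrelevant to the counts in (ii).
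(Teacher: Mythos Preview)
Your argument is correct. Part (i) is essentially identical to the paper's proof: injectivity of $d\End^{*}(u)$ (which follows from surjectivity of $d\End(u)$ under ({\bf H2})) combined with non-degeneracy of $\nabla^{2}J(\rho_i)$ forces $\ker\A(u)=K_u$.

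For part (ii) your route differs from the paper's. The paper introduces the positive square root $g(u)$ of the Gramian $G(u)=d\End(u)\,d\End^{*}(u)$, forms $a(\rho_i):=g(u)\,\nabla^{2}J(\rho_i)\,g(u)$, invokes Sylvester to match the signature of $a(\rho_i)$ with that of $\nabla^{2}J(\rho_i)$, and then explicitly constructs eigenvectors $v_k:=d\End^{*}(u)\,g(u)^{-1}\mu_k$ in $\mathcal{I}_u$ to show that the nonzero spectrum of $\A(u)$ coincides with the spectrum of $a(\rho_i)$. Your argument bypasses the Gramian square root entirely: the identity $(\A(u)v,w)_{\H}=\langle HPv,Pw\rangle_{\rho_i}$ already exhibits $P|_{\mathcal{I}_u}$ as a congruence between the quadratic form of $\A(u)|_{\mathcal{I}_u}$ and that of $H$, so Sylvester applies directly. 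Your approach is shorter and more conceptual; the paper's approach yields slightly more, namely an explicit identification of the nonzero eigenvalues of $\A(u)$ with those of $g(u)\,\nabla^{2}J(\rho_i)\,g(u)$, not merely equality of signatures. For the purposes of the proposition either argument suffices.
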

\begin{proof}[Proof of Proposition \ref{hessian-kernel}]
For (i), let $v\in\H$. Since $d\End^*(u)$ is injective, $\A(u)v=0$ implies $\nabla^2J(\rho_i)d\End(u)v=0$. By Lemma \ref{hessian-j}, one gets $v\in K_u$. The converse is clear. For (ii), we first note that by ({\bf H2}) and Lemma \ref{hessian-j}, the image of $\A(u)$ is equal to $\mathcal{I}_u$. Let $g(u)$ be the positive definite symmetric matrix such that $g^2(u)=G(u)$ (see Definition \ref{gramian}). We set $$a(\rho_i):=g(u)\nabla^2J(\rho_i)g(u).$$Since $g(u)=g^T(u)$, by Sylvester's law of inertia, $a(\rho_i)$ and $\nabla^2J(\rho_i)$ have the same numbers of positive and negative eigenvalues. Let $\{\nu_k\}_{k=1,\dots, M}$ be the set of eigenvalues of $a(\rho_i)$ and $\{\mu_k\}_{k=1,\dots,M}$ be the corresponding set of orthonormal eigenvectors. For $k=1,\dots, M$, let 
$$v_k:=d\End^*(u)g(u)^{-1}\mu_k.$$Then, it is clear that the set $\displaystyle\{v_k\}_{k=1,\dots,M}$ forms a basis of $\mathcal{I}_u$. Moreover, we have, for $k=1,\dots,M$,
\begin{eqnarray*}
\A(u)v_k&=&d\End^*(u)\nabla^2J(\rho_i)G(u)g^{-1}(u)\mu_k\\
&=&d\End^*(u)\nabla^2J(\rho_i)g(u)\mu_k\\
&=&\nu_kd\End^*(u)g^{-1}(u)\mu_k\\
&=&\nu_kv_k.
\end{eqnarray*}Therefore, the non-zero part in the spectrum of $\A(u)$ is equal to the spectrum of $a(\rho_i)$. We conclude that $\nabla^2J(\rho_i)$ and the restriction of $\A(u)$ to $\mathcal{I}_u$ have the same signature.

\end{proof}
As a direct consequence of Lemma \ref{hessian-j} and Proposition \ref{hessian-kernel}, we have the following result, which, together with Proposition \ref{hessian-kernel}, will play a crucial role in the convergence analysis of (\ref{IVP-grad-flow1}).
\begin{coro}\label{coro:index}
~
\begin{itemize}
\item[(i)]For $u\in\H_1$, $\A(u)$ restricted to $\mathcal{I}_u$ is positive definite;
\item[(ii)]For $u\in\H_M$, $\A(u)$ restricted to $\mathcal{I}_u$ is negative definite;
\item[(iii)]For $u\in\H_i$ with $i=2,\dots, M-1$, $\A(u)$ restricted to $\mathcal{I}_u$ is not definite.
\end{itemize}
\end{coro}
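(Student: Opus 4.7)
The statement is essentially a bookkeeping consequence of the two preceding results, so the plan is short. The strategy is to observe that the ambient Hilbert space splits orthogonally as $\H = K_u \oplus \mathcal{I}_u$ (since $\mathcal{I}_u = \operatorname{Image}(d\End^*(u))$ is the orthogonal complement of $K_u = \ker d\End(u)$ with respect to $(\cdot,\cdot)_\H$), and then read off the signature of $\A(u)\vert_{\mathcal{I}_u}$ from Proposition \ref{hessian-kernel} together with Lemma \ref{hessian-j}.

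Concretely, first I would invoke Proposition \ref{hessian-kernel}(i): the kernel of $\A(u)$ is exactly $K_u$. Combined with the orthogonal decomposition above, this means that the restriction $\A(u)\vert_{\mathcal{I}_u}$ is a non-degenerate symmetric operator on $\mathcal{I}_u$. Hence every eigenvalue of $\A(u)$, counted on the complement of its kernel, comes from $\mathcal{I}_u$, and the dimension of $\mathcal{I}_u$ equals $\operatorname{rank}(d\End(u))$, which under ({\bf H2}) equals $N = \dim \mathcal{O}(\rho_0)$.

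Next I would apply Proposition \ref{hessian-kernel}(ii): the number of positive (resp. negative) eigenvalues of $\A(u)$ coincides with that of $\nabla^2 J(\rho_i)$. Since both Hessians live in spaces of dimension $N$ (the Hessian of $J$ on $T_{\rho_i}\mathcal{O}(\rho_0)$, and the Hessian of $\mathcal{J}$ restricted to $\mathcal{I}_u$), equality of the positive and negative counts forces them to have the same signature. Finally I would plug in Lemma \ref{hessian-j}: $\nabla^2 J(\rho_1)$ is positive definite, so all $N$ eigenvalues of $\A(u)\vert_{\mathcal{I}_u}$ are positive, giving (i); analogously $\nabla^2 J(\rho_M)$ is negative definite yields (ii); and for the intermediate indices the indefiniteness of $\nabla^2 J(\rho_i)$ transfers verbatim to $\A(u)\vert_{\mathcal{I}_u}$, giving (iii).

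There is no real obstacle here — the statement is a corollary in the literal sense. The only subtlety worth spelling out is the orthogonal splitting $\H = K_u \oplus \mathcal{I}_u$, which is what guarantees that ``restriction to $\mathcal{I}_u$'' picks up precisely the nonzero spectrum of $\A(u)$, so that the signature count from Proposition \ref{hessian-kernel}(ii) can be interpreted as a definiteness statement on $\mathcal{I}_u$.
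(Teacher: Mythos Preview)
Your argument is correct and matches the paper's approach: the corollary is stated there as an immediate consequence of Lemma~\ref{hessian-j} and Proposition~\ref{hessian-kernel}, with no separate proof given. Your explicit mention of the orthogonal splitting $\H=K_u\oplus\mathcal{I}_u$ simply makes precise why the nonzero spectrum of $\A(u)$ lives on $\mathcal{I}_u$, which is implicit in the paper's conclusion that ``$\nabla^2J(\rho_i)$ and the restriction of $\A(u)$ to $\mathcal{I}_u$ have the same signature.''
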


Based on (i) of Proposition \ref{hessian-kernel}, the following result is a generalization of the classical Morse Lemma (see for example \cite[Ch. 7, Th. 5.1]{lang}). For functions defined on a finite dimensional manifold, a result similar to Proposition \ref{morse-bott} is known as Morse-Bott Lemma. In fact, the following result deals with the case where critical submanifolds are of infinite dimension. For the sake of completeness, a proof will be given in Appendix. 
\begin{prop}\label{morse-bott}
Let $\mathcal{C}$ be a connected component of $\H_i$ and $u_c\in\mathcal{C}$. Then, there exist an open neighborhood $U$ of $u_c$ in $\H$ and a smooth chart $\displaystyle\phi:U\mapsto \H=\mathcal{P}_1\stackrel{\perp}{\oplus}\mathcal{P}_2\stackrel{\perp}{\oplus}\mathcal{K}$ such that
\begin{itemize}
\item[(i)] the dimensions of $\mathcal{P}_1$ and $\mathcal{P}_2$ are equal to $N_1^i$ and $N-N_1^i$ respectively, where $N_1^i$ is the Morse index of $u_c$;
\item[(ii)]$\phi(u_c)=0$, and $$\phi(U\cap\mathcal{C})=\{(v_1,v_2,w)\in\mathcal{P}_1\times\mathcal{P}_2\times\mathcal{K}, ~v_1=v_2=0\};$$
\item[(iii)] $\displaystyle\mathcal{J}\circ\phi^{-1}(v_1,v_2, w)=\mathcal{J}(u_c)-\Vert v_1\Vert_\H^2+\Vert v_2\Vert_\H^2$.
\end{itemize}
\end{prop}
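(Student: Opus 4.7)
The plan is to exploit the submersion structure of $\End$ at $u_c$ to split $\H$ locally into an infinite-dimensional tangential part along $\mathcal{C}$ on which $\mathcal{J}$ is constant, and a finite-dimensional transverse slice of dimension $N$ on which $\mathcal{J}$ becomes a classical Morse function. By assumption (\textbf{H2}) and Proposition \ref{critical-manifold}, $\H$ admits the orthogonal splitting $\H=\mathcal{I}_{u_c}\stackrel{\perp}{\oplus}K_{u_c}$, where $\mathcal{I}_{u_c}$ has finite dimension $N$ and $d\End(u_c)$ restricted to $\mathcal{I}_{u_c}$ is a linear isomorphism onto $T_{\rho_i}\O(\rho_0)$. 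Fix once and for all a smooth chart $\chi$ of $\O(\rho_0)$ near $\rho_i$ with $\chi(\rho_i)=0$ valued in $T_{\rho_i}\O(\rho_0)$, and define
\[ \Psi(u):=\bigl(\chi(\End(u)),\,P(u-u_c)\bigr), \]
where $P$ denotes the orthogonal projection onto $K_{u_c}$. The derivative $d\Psi(u_c)h=(d\End(u_c)h,\,Ph)$ decomposes every $h\in\H$ across the splitting through the isomorphism $d\End(u_c)|_{\mathcal{I}_{u_c}}$ on the first factor and the identity on the second, hence is a bounded linear isomorphism. The Hilbert-space inverse function theorem then yields a diffeomorphism from an open neighborhood $U$ of $u_c$ onto an open neighborhood of $0$ in $T_{\rho_i}\O(\rho_0)\oplus K_{u_c}$, mapping $\mathcal{C}\cap U$ onto $\{0\}\oplus K_{u_c}$.

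In these coordinates one has $\mathcal{J}=\tilde J\circ\mathrm{pr}_1$ with $\tilde J:=J\circ\chi^{-1}$, so $\mathcal{J}$ depends only on the finite-dimensional first factor. By Lemma \ref{hessian-j}, $\tilde J$ has a non-degenerate critical point at the origin of Morse index $N_1^i$. Applying the classical finite-dimensional Morse lemma to $\tilde J$ on the $N$-dimensional space $T_{\rho_i}\O(\rho_0)$ produces a local diffeomorphism $\eta$ fixing $0$ with
\[ \tilde J(\eta(x_1,x_2))=J(\rho_i)-|x_1|^2+|x_2|^2 \]
on a neighborhood of the origin in $\R^{N_1^i}\oplus\R^{N-N_1^i}$. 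To meet the normal form (iii), I pick subspaces $\mathcal{P}_1,\mathcal{P}_2\subset\mathcal{I}_{u_c}$ with $\mathcal{P}_1\stackrel{\perp}{\oplus}\mathcal{P}_2=\mathcal{I}_{u_c}$ and $\dim\mathcal{P}_1=N_1^i$, and identify $\mathcal{P}_1,\mathcal{P}_2$ with $\R^{N_1^i},\R^{N-N_1^i}$ via $\H$-orthonormal bases so that the Euclidean norms coincide with the restrictions of $\|\cdot\|_\H$. Setting $\mathcal{K}:=K_{u_c}$ and $\phi:=(\eta^{-1}\oplus\mathrm{id}_{\mathcal{K}})\circ\Psi$, composed with the isometric identification just chosen, then produces the required chart, immediately satisfying (i)--(iii).

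The main obstacle is twofold. First, one must justify that $\Psi$ is a genuine smooth diffeomorphism in the Hilbert-space setting: this relies on the smoothness of $\End$, on the boundedness of $d\End(u_c)$ (Corollary \ref{estimate-var-eq-order1}), and on $K_{u_c}$ being a closed subspace of finite codimension, which together make the Hilbert-space inverse function theorem directly applicable. Second, one must arrange that the quadratic form in the normal form is the restriction of $\|\cdot\|_\H^2$ to $\mathcal{P}_1$ and $\mathcal{P}_2$, rather than an arbitrary positive- or negative-definite quadratic form: this is secured by running the classical Morse lemma in $\H$-orthonormal coordinates of $\mathcal{I}_{u_c}$ adapted to the sign decomposition of $\nabla^2 J(\rho_i)$ (available from Proposition \ref{hessian-kernel}), so that the pull-back of the Euclidean norm on $\R^{N_1^i}\oplus\R^{N-N_1^i}$ coincides with the $\H$-norm on $\mathcal{P}_1\oplus\mathcal{P}_2$.
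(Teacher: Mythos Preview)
Your proof is correct and takes a genuinely different route from the paper's. The paper argues abstractly: it first invokes the submanifold structure of $\H_i$ to obtain a generic chart $\varphi:U\to\mathcal{P}\stackrel{\perp}{\oplus}\mathcal{K}$ straightening out $\mathcal{C}$, then for each fixed $w\in\mathcal{K}$ applies the finite-dimensional Morse Lemma to the slice function $\mathcal{J}_w(v):=\mathcal{J}\circ\varphi^{-1}(v,w)$, obtaining a $w$-dependent Morse chart $\psi_w$; it finally sets $\phi(u)=(\psi_{\varphi_2(u)}(\varphi_1(u)),\varphi_2(u))$, relying on the (asserted but not verified) smooth dependence of $\psi_w$ on the parameter $w$. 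Your approach instead exploits the specific factorization $\mathcal{J}=J\circ\End$: by taking the first coordinate of $\Psi$ to be $\chi(\End(u))$, you arrange that $\mathcal{J}\circ\Psi^{-1}$ depends \emph{only} on the finite-dimensional first factor, so a single application of the classical Morse Lemma to $\tilde J=J\circ\chi^{-1}$ suffices and the parametric issue never arises. This is cleaner for the problem at hand and sidesteps the smooth-dependence step, though it is tailored to the composite structure of $\mathcal{J}$; the paper's argument, while requiring the extra parametric ingredient, is the standard Morse--Bott scheme that would apply to any functional satisfying the conclusions of Proposition~\ref{hessian-kernel}, not just one that factors through a finite-dimensional map.
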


\begin{rem}\label{rem:index}
By Corollary \ref{coro:index}, $N_1^1=0$, $0<N_1^i<N$, for $i=2,\dots, M-1$, and $N_1^M=N$.
\end{rem}

\begin{rem}\label{rect}
The gradient flow defined by $\cJ$ and the one defined by $\tilde{\cJ}:=\cJ\circ\phi^{-1}$ are equivalent in the sense of \cite[Theorem, Ch. 1, Sec. 5.3]{arnold}. The diffeomorphism $\phi$ provides us with a suitable change of coordinates and allows us to simplify the expression of (\ref{IVP-grad-flow1}).
\end{rem}\medskip

Once we have Proposition \ref{morse-bott}, the proof of Theorem \ref{pt-convergence} is a straightforward adaptation of the proof sketch of \cite[Prop. 3.6, Ch. 1, p 20]{helmke-moore}.\smallskip

\begin{proof}[Proof of Theorem \ref{pt-convergence}]
We know from Proposition \ref{converg-set2} that the flow converges to a connected component $\mathcal{C}$ of $\H_i$ for some $i\in\{1,\dots, M\}$. Fix an arbitrary $u_c\in\mathcal{C}$ and consider a neighborhood $U$ of $u_c$ small enough . Without loss of generality, if $\Pi(\cdot)$ is the solution of (\ref{IVP-grad-flow1}), we can assume that $\Pi(s_0)\in U$ for some $s_0>0$ large enough. Using the change of coordinates introduced in Proposition \ref{morse-bott} and taking into account Remark \ref{rect}, the gradient flow of $\cJ$ starting from $\Pi(s_0)$ is equivalent to the gradient flow of $\cJ\circ\phi^{-1}$ in a neighborhood of $\mathcal{C}$,
\begin{equation}\label{new-flow}
\begin{array}{lll}
\dot{v}_1&=&-v_1,\smallskip\\
\dot{v}_2&=&v_2,\smallskip\\
\dot{w}&=&0,
\end{array}
\end{equation}where $(v_1,v_2,w)\in\mathcal{P}_1\times\mathcal{P}_2\times\mathcal{K}$ and $(v_1(0), v_2(0),w(0))=\phi(\Pi(s_0))$. The solution of (\ref{new-flow}) will be denoted by $\displaystyle\tilde{\Pi}(\cdot)$. Two situations can happen: 
\begin{itemize}
\item[(i)] if $\phi(\Pi(s_0))=(v_{1,0},0,w_0)$ for some $v_{1,0}\in\mathcal{P}_1$ and $w_0\in\mathcal{K}$, then $\displaystyle\lim_{s\rightarrow\infty}\tilde{\Pi}(s)=(0,0,w_0)$, which implies that $$\displaystyle\lim_{s\rightarrow\infty}{\Pi}(s)=\phi^{-1}(0,0,w_0)\in\H_i;$$
\item[(ii)] if $\phi(\Pi(s_0))=(v_{1,0},v_{2,0},w_0)$ with $v_{2,0}\neq 0\in\mathcal{P}_2$, then 
\begin{equation}\label{diverg}
 \displaystyle\lim_{s\rightarrow\infty}\tilde{\Pi}(s)=+\infty.
 \end{equation}This case requires that $\mathcal{P}_2$ be a subspace of dimension greater than $1$, i.e., $\mathcal{C}$ is a connected component of $\H_i$ for some $i\in\{1,\dots, M-1\}$, see Remark \ref{rem:index}. However, if this case happens, (\ref{diverg}) implies that $\Pi(\cdot)$ does not converge to $\mathcal{C}$. Therefore, $\mathcal{C}$ is necessarily a connected component of $\H_M$. In this case, the flow (\ref{new-flow}) is reduced to the following
\begin{equation}\label{new-flow2}
\begin{array}{lll}
\dot{v}_1&=&-v_1,\smallskip\\
\dot{w}&=&0,
\end{array}
\end{equation}where $(v_1,w)\in\mathcal{P}_1\times\mathcal{K}$ with $\textrm{dim }\mathcal{P}_1=N$. The asymptotic behavior of (\ref{new-flow2}) implies that $\Pi(s)$ will converge to an element of $\mathcal{C}\subset\H_M$ as $s\rightarrow\infty$.
\end{itemize}\smallskip

We conclude from (i) and (ii) that all the solutions of the gradient flow of $\cJ$ always converge pointwise. It is also clear that for almost all initial conditions, case (ii) happens, i.e., almost all the solutions converge to a maximum of $\cJ$.
\end{proof}


\subsection{Comments on the role of singular controls}

We give in this section a more explicit characterization of singular controls for \eqref{CS} and then explain some difficulty in the analysis of the asymptotic behavior of \eqref{IVP-grad-flow1} related to their presence in $\H$. We first recall the following result which is a direct application of \cite[Th. 6, p 41]{bonnard} or \cite[Prop. 5.3.4, p 94]{trelat} to \eqref{CS}.

\begin{lemma}\label{char:singular}
Let $u\in\H$ and $\rho(\cdot)$ be the corresponding trajectory. Then, $u$ is a singular control if and only if there exists an absolutely continuous application $q:[0,T]\mapsto T\O(\rho_0)\setminus\{0\}$ such that
\begin{equation}\label{eq:adjoint-vector-bis}
\dot{q}(t)=[H_0+u(t) H_1,q(t)],
\end{equation}and
\begin{equation}\label{eq:constraint}
\langle q(t),[H_1,\rho(t)]\rangle_{\rho(t)}=0,\quad \textrm{ for }t\in[0,T].
\end{equation}
\end{lemma}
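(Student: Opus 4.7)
The plan is to prove both implications via the duality between $d\End(u)$ and the switching function established in Lemma \ref{switch}. The key observation is that singularity of $u$ means precisely that the orthogonal complement of $\textrm{Image}(d\End(u))$ in the finite-dimensional space $T_{\End(u)}\O(\rho_0)$ is nontrivial, and elements of this complement correspond exactly to terminal values $q_T$ of adjoint vectors producing identically vanishing switching functions. This mirrors the abstract Pontryagin-style characterization of \cite{bonnard,trelat}, specialized to the bilinear system \eqref{CS} via the explicit formulas already derived in Section \ref{recall}.

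For the forward direction, suppose $u$ is singular. By Definition \ref{de:singular-control}, $\textrm{Image}(d\End(u))$ is a proper subspace of $T_{\rho(T)}\O(\rho_0)$; since the ambient space is finite-dimensional and carries the inner product $\langle\cdot,\cdot\rangle_{\rho(T)}$, there exists $q_T \in T_{\rho(T)}\O(\rho_0) \setminus \{0\}$ orthogonal to this image. Define $q(\cdot)$ as the unique solution of the adjoint equation \eqref{adj} with terminal value $q(T) = q_T$; a direct computation shows $q(t) = U(t) U(T)^\dagger q_T U(T) U(t)^\dagger$. This formula guarantees both that $q(t) \in T_{\rho(t)}\O(\rho_0)$ for all $t$ and that $q(t) \ne 0$ throughout, since conjugation by a unitary is a linear isomorphism. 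Applying Lemma \ref{switch}, one obtains $(\Phi_{\rho_0, q_T}, v)_\H = \langle q_T, d\End(u)v\rangle_{\rho(T)} = 0$ for every $v \in \H$, hence $\Phi_{\rho_0, q_T} \equiv 0$ in $\H$. The expression \eqref{expression-phi} makes $\Phi_{\rho_0,q_T}$ a continuous function of $t \in [0,T]$ (it depends continuously on $U(\cdot) \in \SU(n)$), so pointwise vanishing follows from almost-everywhere vanishing, giving \eqref{eq:constraint}.

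For the converse, let $q(\cdot)$ be any absolutely continuous solution of \eqref{eq:adjoint-vector-bis}--\eqref{eq:constraint} with $q(t) \ne 0$ for all $t$, and set $q_T := q(T)$. Condition \eqref{eq:constraint} states exactly that $\Phi_{\rho_0, q_T}(t)$ vanishes on $[0,T]$, so in particular $(\Phi_{\rho_0, q_T}, v)_\H = 0$ for every $v \in \H$. Lemma \ref{switch} then reads $\langle q_T, d\End(u)v\rangle_{\rho(T)} = 0$ for all $v$, which shows that the nonzero vector $q_T$ is orthogonal to $\textrm{Image}(d\End(u))$. Hence the image is a proper subspace of $T_{\rho(T)}\O(\rho_0)$ and $u$ is singular, concluding the proof. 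No step presents a genuine obstacle; the only mild technicalities are verifying that the adjoint solution remains tangent to the orbit and nowhere vanishing (both immediate from the conjugation formula) and upgrading $L^2$-vanishing of the switching function to pointwise vanishing (immediate from its continuity in $t$).
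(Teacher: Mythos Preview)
Your argument is correct. The paper itself does not supply a proof of this lemma: it simply records the statement as ``a direct application of \cite[Th.~6, p.~41]{bonnard} or \cite[Prop.~5.3.4, p.~94]{trelat} to \eqref{CS}.'' You instead give a self-contained derivation using only the machinery already built in Section~\ref{recall}, in particular the duality identity of Lemma~\ref{switch} and the continuity of the switching function noted in \eqref{expression-phi}. The two routes agree in spirit---both rest on the fact that singularity is equivalent to the existence of a nonzero covector annihilating the image of the end-point differential---but your version spells out why the abstract Pontryagin-type criterion from \cite{bonnard,trelat} specializes here to the concrete adjoint/switching-function conditions \eqref{eq:adjoint-vector-bis}--\eqref{eq:constraint}. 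The payoff of your approach is that a reader need not consult the external references; the cost is only a few extra lines, and the technical checks you flag (tangency and nonvanishing of $q(\cdot)$ via the conjugation formula, and upgrading $L^2$-vanishing to pointwise vanishing via continuity) are indeed routine.
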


Using Definition \ref{Riem-metric} and (\ref{Ad-invariance}), we have the following equivalent characterization of singular controls.

\begin{coro}\label{equiv:singular}
A control $u\in\H$ is singular if and only if there exists $\Omega_0\in\mathfrak{p}\setminus\{0\}$ such that 
\begin{equation}\label{constraint-bis}
\tr(\Omega_0~U^\dagger(t)H_1U(t))=0,\quad \textrm{ for }t\in[0,T],
\end{equation}with $U(\cdot)$ satisfying
\begin{equation}\label{U-bis}
\left\{\begin{array}{lll}
\dot{U}(t)&=&(H_0+u(t) H_1)U(t),\qquad t\in [0,1],\\
U(0)&=&\textrm{Id}.
\end{array}\right.
\end{equation}
\end{coro}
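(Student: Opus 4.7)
My plan is to derive Corollary \ref{equiv:singular} directly from Lemma \ref{char:singular} by explicitly integrating the adjoint equation (\ref{eq:adjoint-vector-bis}) and then transporting the orthogonality constraint (\ref{eq:constraint}) back to the base point $\rho_0$ by means of the $\Ad$-invariance of the Riemannian metric.

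First, I would observe that the unique solution of (\ref{eq:adjoint-vector-bis}) is $q(t)=U(t)q(0)U(t)^\dagger=\Ad_{U(t)}q(0)$, so $q(\cdot)$ is nowhere zero if and only if $q(0)\neq 0$. Since $q(0)\in T_{\rho_0}\O(\rho_0)=\ad_{\rho_0}(\su(n))$ and the kernel of $\ad_{\rho_0}$ on $\su(n)$ is $\mathfrak{h}$, the restriction of $\ad_{\rho_0}$ to $\mathfrak{p}$ is a linear isomorphism onto $T_{\rho_0}\O(\rho_0)$. I can therefore write uniquely $q(0)=[\rho_0,\Omega_0]$ for some $\Omega_0\in\mathfrak{p}$, and the condition $q(\cdot)\neq 0$ becomes $\Omega_0\neq 0$.

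Next, applying the $\Ad$-invariance (\ref{Ad-invariance}) of the metric, together with $\Ad_{U(t)^\dagger}q(t)=q(0)$ and $\Ad_{U(t)^\dagger}[H_1,\rho(t)]=[U(t)^\dagger H_1 U(t),\rho_0]$, recasts (\ref{eq:constraint}) as
\[
\langle\,[\rho_0,\Omega_0],\;[U(t)^\dagger H_1 U(t),\rho_0]\,\rangle_{\rho_0}=0,\qquad t\in[0,T].
\]
Evaluating the left-hand side with formula (\ref{metric1}) at $U=\textrm{Id}$ yields, up to sign, $\tr\bigl(\Omega_0^{\dagger}\,(U(t)^\dagger H_1 U(t))^{\mathfrak{p}}\bigr)$. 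Since $\Omega_0\in\mathfrak{p}$ is Hilbert-Schmidt orthogonal to $\mathfrak{h}$, the $\mathfrak{h}$-component of $U(t)^\dagger H_1 U(t)$ contributes nothing to the trace, and since $\Omega_0^{\dagger}=-\Omega_0$, this expression reduces to a nonzero constant multiple of $\tr(\Omega_0\,U(t)^\dagger H_1 U(t))$. Hence (\ref{eq:constraint}) is equivalent to (\ref{constraint-bis}), and the two implications follow simultaneously.

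The argument is essentially bookkeeping; the only step that requires care is the one in which $(U(t)^\dagger H_1 U(t))^{\mathfrak{p}}$ inside the trace is replaced by the full matrix $U(t)^\dagger H_1 U(t)$, which is legitimate precisely because $\Omega_0\in\mathfrak{p}$ annihilates $\mathfrak{h}$ under the Hilbert-Schmidt pairing. I do not foresee any real obstacle beyond that.
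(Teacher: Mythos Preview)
Your proof is correct and follows essentially the same route as the paper: write $q(0)=[\rho_0,\Omega_0]$ with $\Omega_0\in\mathfrak{p}$, use $\Ad$-invariance of the metric to reduce the constraint (\ref{eq:constraint}) to a Hilbert--Schmidt pairing at $\rho_0$, and then drop the $\mathfrak{h}$-component of $U^\dagger(t)H_1U(t)$ because $\Omega_0\in\mathfrak{p}$. The only difference is organizational: the paper proves one implication explicitly and declares the converse ``immediate,'' whereas you set up the bijection $q(0)\leftrightarrow\Omega_0$ via the isomorphism $\ad_{\rho_0}|_{\mathfrak{p}}$ up front, so both directions fall out at once---arguably the cleaner presentation.
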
\smallskip

\begin{proof}[Proof of Corollary \ref{equiv:singular}]Assume there exists $\Omega_0\in\mathfrak{p}\setminus\{0\}$ such that Eqs. (\ref{constraint-bis}) and (\ref{U-bis}) are satisfied. Let $q(\cdot)$ be the solution of Eq. (\ref{eq:adjoint-vector-bis}) starting from $q(0):=[\rho_0,\Omega_0]$. Then, we have
\begin{eqnarray*}
q(t)&=&U(t)[\rho_0,\Omega_0]U^\dagger(t)\\
&=&\rho(t)U(t)\Omega_0U^\dagger(t)-U(t)\Omega_0U^\dagger(t)\rho(t)\\
&=&\ad_{\rho(t)}\Ad_{U(t)}\Omega_0.
\end{eqnarray*}This implies
\begin{eqnarray*}
&&\langle q(t),[H_1,\rho(t)]\rangle_{\rho(t)}\\
&=&-\langle \ad_{\rho(t)}\Ad_{U(t)}\Omega_0, \ad_{\rho(t)}\Ad_{U(t)}U^\dagger(t)H_1U(t)\rangle_{\rho(t)}\\
&=&-\tr(\Omega_0~(U^\dagger(t)H_1U(t))^\mathfrak{p})\\
&=&-\tr(\Omega_0~ÊU^\dagger(t)H_1U(t))=0.
\end{eqnarray*}
By Proposition \ref{char:singular}, $u$ is singular. The converse is immediate.
\end{proof}

\begin{rem}\label{rem:singular}
This result states that a control $u$ is singular if and only if the real and imaginary parts of the matrix elements of the projection of $U^\dagger(t) H_1U(t)$ on $\mathfrak{p}$, where $U(\cdot)$ satisfies \eqref{U-bis}, are $\R-$linearly dependent functions of $t$ over the time interval $[0,T]$. 
\end{rem}

We note that, according to Corollary \ref{inclus}, the elements of $\H_i$, for $i=1,\dots, M$, are still critical points of $\cJ$ called \emph{kinematic} critical points. However, due to possible rank deficiency of the end-point map, the Submersion Theorem may non longer be used and $\H_i$ may not necessarily be submanifolds of $\H$. More importantly, for $u\in\H_i$, although the expression of the Hessian of $\cJ$ at $u$ given by \eqref{eq:hessian} is still valid, the two crucial results given in Proposition \ref{hessian-kernel} may fail. In other words, the non-zero part of the signature of $\mathcal{A}(u)$ may non longer be determined by the signature of $\nabla^2J(\End(u))$. This is the first complication in the asymptotic analysis due to the presence of singular controls.  

The second difficulty is the occurrence of \emph{non-kinematic} critical points of $\cJ$. We know from Remark \ref{main-rem} that these critical points are not global maxima of $\cJ$. However, nothing \emph{a priori} prevents them from being local maxima and then ``attracting" solutions of \eqref{IVP-grad-flow1}. Although this situation has never been observed in numerical simulations, a formal proof is still missing. A complete spectral analysis on the Hessian of $\cJ$ needs to be performed. We note that if $u$ is a non-kinematic critical points of $\cJ$, the Hessian form of $\cJ$ at $u$ is given by
\begin{eqnarray}\label{hessian-J2}
&&\nabla^2\cJ(u)(v,v)\\
&=&(v,\mathcal{A}(u)v)_\H\nonumber+\langle\nabla J(\rho),d^2\End(u)(v,v)\rangle_\rho,
\end{eqnarray}where $v\in\H$ and $\rho:=\End(u)$.


\section{Conclusion}\label{concl}

We presented in this paper a gradient-type dynamical system to solve the problem of maximizing quantum observables ({\bf Problem 1}). Under the regularity assumption on the controls ({\bf H2}), we proved that for almost all initial conditions, Eq. \eqref{IVP-grad-flow1} converges to a solution of {\bf Problem 1}. We also detailed difficulties related to the presence of singular controls, which constitute the starting point for further investigations. From our point of view, one first needs more explicit characterization of singular controls, then deduces information on the ``size" of the set of singular controls $\mathcal{S}$ in the entire control space $L^2([0,T],\R)$. The next step is to investigate the optimality status of a ``generic" elements of $\mathcal{S}$. Finally, let us also emphasize that upon due care to numerical details, simulations for extensive systems always achieved the global maximum.  

\appendix[Proof of Proposition \ref{morse-bott}]
We first note that Morse-Bott Lemma are often stated without proof as a direct consequence of Morse Lemma. A complete proof of this result for functions defined in finite dimensional vector spaces can be found in \cite{Ban-Hur}. We will see in the following that dealing with infinite dimensional critical submanifolds presents no difficulty.\medskip

\begin{proof}[Proof of Proposition \ref{morse-bott}]
Let $\mathcal{C}$ be a connected component of $\H_i$ and $u_c\in\mathcal{C}$. Since $\H_i$ is a submanifold of $\H$ of co-dimension $N$, there exist a neighborhood $U$ of $u_c$ in $\H$ and a smooth chart $\displaystyle\varphi: U\mapsto \H=\mathcal{P}\stackrel{\perp}{\oplus}\mathcal{K}$ such that
\begin{itemize}
\item the dimension of $\mathcal{P}$ is equal to $N$;
\item $\varphi(u_c)=0$, and $\varphi(U\cap\mathcal{C})=\{0\}\times \mathcal{K}$.
\end{itemize}
Fix $w\in\mathcal{K}$ in a neighborhood of $0$. Proposition \ref{hessian-kernel} implies that the Hessian at $0$ of the new functional $\cJ_w$ defined by $$\cJ_w(v):=\cJ\circ\varphi^{-1}(v,w)$$ is non degenerate on $\mathcal{P}$. Note also that the signature of the Hessian of $\cJ_w$ at $0$ is equal to the one of the restriction of $\mathcal{A}(u_c)$ to $\mathcal{I}_{u_c}$. Applying Morse Lemma (cf. \cite[Ch. 7, Th. 5.1]{lang}) to $\cJ_w$, there exists a smooth change of coordinates $\psi_w$, $v\mapsto x:=\psi_w(v)$, such that 
$$\cJ_w(\psi_w^{-1}(x))=(Ax,x),$$where $A$ is a symmetric matrix which has the same signature of the Hessian of $\cJ_w$ at $0$. Note also that $\psi_w$ depends smoothly on $w$.

Let $\varphi(u)=(\varphi_1(u),\varphi_2(u))\in\mathcal{P}\times\mathcal{K}$ and $\phi$ be the new smooth chart for $\mathcal{C}$ in a neighborhood of $u_c$ defined by
$$\phi(u):=(\psi_{\varphi_2(u)}(\varphi_1(u)),\varphi_2(u)).$$ Then, by construction, if $x=\phi(u)$, we have 
\begin{equation}\label{proof-morse-bott}
\cJ\circ\phi^{-1}(x)=(Ax,x).
\end{equation}
Proposition \ref{morse-bott} follows from \eqref{proof-morse-bott}.
\end{proof}

\bibliographystyle{plain}

\bibliography{Note_Regular_Case}


\end{document}